\theoremstyle{plain}
\newtheorem{thm}{Theorem}[section]
\newtheorem{theorem}[thm]{Theorem}
\newtheorem{lemma}[thm]{Lemma}
\newtheorem{corollary}[thm]{Corollary}
\theoremstyle{definition}
\newtheorem{remark}[thm]{Remark}
\newtheorem{example}[thm]{Example}
\newtheorem{thevarthm}[thm]{\varthmname}
\newenvironment{varthm*}[1]{\trivlist\item[]{\bf #1.}\it}{\endtrivlist}
\renewcommand\geq{\geqslant}
\newcommand\be{\begin{eqnarray*}}
\newcommand\ee{\end{eqnarray*}}
\newcommand\newop[2]{\def#1{\mathop{\rm #2}\nolimits}}
\newop\log{log}
\newop\ord{ord}
\newop\Gal{Gal}
\newop\SL{SL}
\newop\GL{GL}
\newop\Bl{Bl}
\newop\mult{mult}
\newop\mass{mass}
\newop\div{div}
\newop\codim{codim}
\newop\sing{sing}
\newop\vdim{vdim}
\newop\edim{edim}
\newop\Ass{Ass}
\newop\size{size}
\newop\reg{reg}
\newop\areg{areg}
\newop\asreg{asreg}
\newop\satdeg{satdeg}
\newop\supp{supp}
\newop\gin{gin}
\newop\ini{in}
\newop\vol{vol}
\newop\sat{sat}
\newop\length{length}
\newop\depth{depth}
\newop\characteristic{char}
\newcommand\eqnref[1]{(\ref{#1})}
\def\keywordname{{\bfseries Keywords}}%
\def\keywords#1{\par\addvspace\medskipamount{\rightskip=0pt plus1cm
\def\and{\ifhmode\unskip\nobreak\fi\ $\cdot$
}\noindent\keywordname\enspace\ignorespaces#1\par}}
\def\subclassname{{\bfseries Mathematics Subject Classification
(2000)}\enspace}
\def\subclass#1{\par\addvspace\medskipamount{\rightskip=0pt plus1cm
\def\and{\ifhmode\unskip\nobreak\fi\ $\cdot$
}\noindent\subclassname\ignorespaces#1\par}}
\definecolor{qqqqff}{rgb}{0,0,0}
\definecolor{uuuuuu}{rgb}{0,0,0}
\definecolor{zzttqq}{rgb}{0,0,0}
\definecolor{xdxdff}{rgb}{0,0,0}
\definecolor{ttqqqq}{rgb}{0.,0.,0.}
\definecolor{wwccqq}{rgb}{0.,0.,0.}
\definecolor{qqqqcc}{rgb}{0.,0.,0.}
\definecolor{ffttww}{rgb}{0,0.,0.}
\begin{document}

\author{ M.~Lampa-Baczy\'nska\footnote{ML-B was partially supported by National Science Centre, Poland, grant 2016/23/N/ST1/01363},  D.~W\'ojcik }
\title{On the Pappus arrangement of lines, forth and back and to the point}
\date{\today}
\maketitle
\thispagestyle{empty}

\begin{abstract}

The purpose of this paper is to study the famous Pappus configuration of $9$ lines and its dual arrangement. We show among others that by applying the Pappus Theorem to the dual arrangement we obtain the configuration corresponding to the initial data of beginning configuration. We consider also the Pappus arrangements with some additional incidences and we establish algebraic conditions paralleling with these incidences.

\keywords{arrangements of lines, combinatorial arrangements, Pappus Theorem}

\subclass{52C35, 32S22, 14N20, 13F20}
  
\end{abstract}


\section{Introduction}
\label{intro}

The Pappus theorem, proved in the fourth century, is one of the first theorems in what is now known as projective geometry.

\begin{theorem}[Pappus Theorem]\label{hexagon}
	
	Let $A_1$, $A_2$, $A_3$ and  $B_1$, $B_2$, $B_3$ be two triples of mutually distinct points lying on two distinct lines $L_{A}$ and $L_{B}$. Then the intersection points of pairs of lines $C_1=L(A_2,B_3) \cap L(A_3,B_2)$, $C_2=L(A_1,B_3) \cap L(A_3,B_1)$ and $C_3=L(A_1,B_2) \cap L(A_2,B_1)$ are collinear (see Figure \ref{pappus}).
\end{theorem}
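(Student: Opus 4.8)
The plan is to prove the collinearity by a direct computation in homogeneous coordinates on $\P^2$, exploiting the projective invariance of the whole configuration. Since collinearity of $C_1,C_2,C_3$ is preserved by every element of $\mathrm{PGL}_3(\C)$, and since this group acts transitively on projective frames, I may first normalize. The four points $A_1,A_2,B_1,B_2$ are in general position — no three are collinear, because $A_1,A_2$ and $B_1,B_2$ span the \emph{distinct} lines $L_A\ne L_B$, no $B_j$ lies on $L_A$, and no $A_i$ lies on $L_B$ — so I would send them to the standard frame $A_1=[1:0:0]$, $A_2=[0:1:0]$, $B_1=[0:0:1]$, $B_2=[1:1:1]$.

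With this choice the data acquire simple shapes: $L_A=L(A_1,A_2)=\{z=0\}$ and $L_B=L(B_1,B_2)=\{x=y\}$, while the two remaining points become one-parameter families $A_3=[1:a:0]$ and $B_3=[b:b:1]$, with $a,b\in\C$ constrained only by the requirement that the six points stay distinct. For each index $i$ I would then write the two lines defining $C_i$ as cross products of the coordinate vectors of the relevant $A_j$ and $B_k$, and obtain $C_i$ itself as the cross product of those two line vectors. A short calculation yields, for instance, $C_1=[b:ab-a+1:1]$, $C_2=[b:ab:a]$ and $C_3=[0:1:1]$.

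It then remains to check that these three points are collinear, i.e.\ that the $3\times3$ determinant of their coordinate vectors vanishes. Expanding
\[
\det\begin{pmatrix} b & ab-a+1 & 1 \\ b & ab & a \\ 0 & 1 & 1 \end{pmatrix}
\]
along the first column, both surviving $2\times2$ minors equal $ab-a$, so the determinant is $b(ab-a)-b(ab-a)=0$. As this holds identically in $a$ and $b$, the points $C_1,C_2,C_3$ lie on a common line, as claimed.

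The computation is routine; the one step deserving care is the reduction to normal form, where I must verify that $A_1,A_2,B_1,B_2$ genuinely form a frame (this is exactly where $L_A\ne L_B$ and the distinctness of the points are used) and that the denominators appearing in the parametrizations do not vanish, so that the intersection points are well defined. A more conceptual alternative avoids coordinates altogether: the two cubics $L(A_2,B_3)\cup L(A_3,B_1)\cup L(A_1,B_2)$ and $L(A_3,B_2)\cup L(A_1,B_3)\cup L(A_2,B_1)$ both pass through all nine points $A_i,B_j,C_k$, while the degenerate cubic $L_A\cup L_B\cup L(C_1,C_2)$ passes through eight of them; the Cayley--Bacharach theorem for plane cubics then forces it through the ninth, namely $C_3$, which — being off both $L_A$ and $L_B$ — must lie on $L(C_1,C_2)$.
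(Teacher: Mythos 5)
Your proof is correct: I checked the normalization, the resulting coordinates $C_1=[b:ab-a+1:1]$, $C_2=[b:ab:a]$, $C_3=[0:1:1]$, and the determinant expansion (both surviving minors indeed equal $ab-a$). But your main argument takes a genuinely different route from the paper's. The paper proves Theorem \ref{hexagon} via Chasles' theorem (Theorem \ref{thm: Chasles}): the cubics $\Gamma_1=L(A_1,B_3)\cup L(A_2,B_1)\cup L(A_3,B_2)$ and $\Gamma_2=L(A_1,B_2)\cup L(A_2,B_3)\cup L(A_3,B_1)$ meet in the nine configuration points, and the cubic $L_A\cup L_B\cup L(C_1,C_2)$ passing through eight of them is forced to contain $C_3$ as well. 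This is essentially identical to the ``conceptual alternative'' in your closing paragraph (the same splitting of the nine joining lines into two triples), so you have in effect reproduced the paper's proof too. Comparing the two: your normalization-plus-determinant argument is elementary and self-contained, works over an arbitrary field (it uses nothing but commutativity of the ground field, which is exactly what Pappus encodes), and anticipates the coordinate setup the paper itself adopts later in proving Theorem \ref{dualthm}; the Chasles route, which the paper advertises as the one ``best appealing to algebraic geometers,'' buys conceptual insight by placing Pappus inside the Cayley--Bacharach circle of ideas, at the price of invoking a nontrivial theorem and of needing the nine intersection points to be distinct. One caveat applies to both arguments: the hypothesis as stated does not literally forbid a configuration point from coinciding with $S=L_A\cap L_B$; your claim that $A_1,A_2,B_1,B_2$ form a projective frame (and that $B_3$ can be written as $[b:b:1]$) silently assumes this, just as the paper assumes $\Gamma_1\cap\Gamma_2$ consists of nine distinct points --- though since your determinant vanishes identically in $(a,b)$, your computation degrades more gracefully in such degenerate cases.
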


Note that the labeling of points is taken so, that $A_{i}$, $B_{j}$, $C_{k}$ are collinear if and only, if $i+j+k\equiv0\ (mod\ 3)$.

There are numerous proofs of Pappus Theorem (see \cite{Gebert}, p.11-38). The one best appealing to algebraic geometers is probably that based on the Cayley-Bacharach Theorem. We recall this theorem below in the formulation attributed to Chasles, see \cite[Theorem CB3]{EGH}.

\begin{theorem}[Chasles]\label{thm: Chasles}
    Let $\Gamma_1$, $\Gamma_2$ be plane cubic curves meeting in nine points $P_1, \dots, P_9$. If $\Gamma$ is another cubic curve containing $P_1, \dots, P_8$ then $\Gamma$ contains also $P_9$.
\end{theorem}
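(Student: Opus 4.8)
The plan is to prove this via the standard dimension count in the space of plane cubics, reducing the whole statement to a single assertion about imposed conditions. Write $V$ for the $10$-dimensional vector space of cubic forms on $\P^2$, so that cubic curves are parametrized by $\P(V)=\P^9$, and let $F_1,F_2\in V$ be forms defining $\Gamma_1,\Gamma_2$. Passing through a fixed point is one linear condition on $V$, i.e.\ the vanishing of the functional $\mathrm{ev}_{P}\colon V\to\C$, $G\mapsto G(P)$. Since $\Gamma_1\ne\Gamma_2$, the forms $F_1,F_2$ are linearly independent, and both vanish at $P_1,\dots,P_9$; hence the subspace $W\subseteq V$ of cubics through $P_1,\dots,P_8$ contains the pencil $\langle F_1,F_2\rangle$ and so $\dim W\ge 2$.

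The key reduction is that the theorem follows once I show $\dim W=2$. Indeed, if $\dim W=2$ then $W=\langle F_1,F_2\rangle$, so any cubic $\Gamma=\{G=0\}$ through $P_1,\dots,P_8$ has $G=\lambda F_1+\mu F_2$ for some $\lambda,\mu\in\C$; evaluating at $P_9$ gives $G(P_9)=\lambda F_1(P_9)+\mu F_2(P_9)=0$, so $\Gamma$ passes through $P_9$ as claimed. Equivalently, I must prove that the eight points $P_1,\dots,P_8$ impose independent conditions on cubics, i.e.\ that the evaluation map $V\to\C^8$, $G\mapsto(G(P_1),\dots,G(P_8))$, is surjective, which forces $\dim W=10-8=2$.

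Thus the entire difficulty is concentrated in this independence statement, which is where I expect the main obstacle to lie, since eight points in special position can fail to impose independent conditions (for instance, eight points on a conic $Q$ only cut out the $3$-dimensional space $\{Q\cdot L : L \text{ linear}\}$). The tool for excluding this is the hypothesis that $P_1,\dots,P_9$ form the complete intersection $\Gamma_1\cap\Gamma_2$: a line $\ell\not\subset\Gamma_1$ meets the cubic $\Gamma_1$ in at most $3$ points, so at most three of the $P_i$ are collinear, and likewise a conic meets $\Gamma_1$ in at most $6$ points, so at most six of the $P_i$ lie on a conic. I would then verify independence by exhibiting, for each $i\le 8$, a cubic through the remaining seven points but avoiding $P_i$; given the above bounds, such cubics can always be assembled as unions of lines and conics, since no degenerate configuration (too many collinear points, or all eight on a conic) can occur. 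Carrying out this case analysis cleanly — in particular handling coincidences among the $P_i$ and the possibility that $\Gamma_1$ or $\Gamma_2$ is reducible — is the technical heart of the argument.

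An attractive alternative that sidesteps the combinatorics is residuation on $\Gamma_1$. Assuming $F_1\nmid G$ (otherwise $\Gamma\supset\Gamma_1\ni P_9$ already), both $F_2|_{\Gamma_1}$ and $G|_{\Gamma_1}$ are effective divisors of degree $9$ in the class of $\calo_{\Gamma_1}(3)$; since $F_2$ cuts out $P_1+\dots+P_9$ and $G$ cuts out $P_1+\dots+P_8+P'$, linear equivalence yields $P'\sim P_9$, whence $P'=P_9$ because degree-one divisors on a curve of positive genus are linearly equivalent only if equal. Here the obstacle migrates to the degenerate cases in which $\Gamma_1$ is singular or reducible, so that the relevant component has geometric genus $0$ and the implication $P'\sim P_9\Rightarrow P'=P_9$ must be replaced by a direct argument on each component.
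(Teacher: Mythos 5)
The paper itself does not prove this statement at all: it is recalled verbatim from \cite[Theorem CB3]{EGH} and used as a black box to derive the Pappus Theorem. So your proposal has to be measured against the standard proofs, and as written it has a genuine gap: both of your routes stop exactly at the step that carries the entire content of the theorem. In the first route, the reduction to ``$P_1,\dots,P_8$ impose independent conditions on cubics'' is correct but essentially trivial (the theorem \emph{is} that independence statement); your justification of it remains a plan. You assert that, since at most three of the points are collinear and at most six lie on a conic, one can ``assemble'' for each $i\le 8$ a cubic through the other seven avoiding $P_i$ out of lines and conics, and you yourself defer the case analysis as ``the technical heart''. That analysis is not routine bookkeeping: one must, for example, split the seven points as conic-through-five plus line-through-two (or three lines), and rule out that \emph{every} admissible choice of conic passes through $P_i$ --- which requires knowing when five points impose dependent conditions on conics --- in all configurations. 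None of this is carried out, so no proof has been given. A smaller but real issue: your Bézout bounds assume $\ell\not\subset\Gamma_1$; in this paper's application $\Gamma_1$ is a union of three lines, so for such $\ell$ you must instead intersect with $\Gamma_2$ (legitimate, since the cubics share no component, but it has to be said).

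Your second route has the more serious defect that it fails precisely in the case for which the paper invokes the theorem: in the proof of Pappus, both $\Gamma_1$ and $\Gamma_2$ are triangles, so every component is rational and the implication $P'\sim P_9\Rightarrow P'=P_9$ is unavailable on any component; you acknowledge this but offer no replacement. The standard repair is to run the residuation argument not on $\Gamma_1$ but on a general member of the pencil $\lambda F_1+\mu F_2$: because the nine intersection points are distinct, Bézout forces $\Gamma_1$ and $\Gamma_2$ to be smooth and transversal at each of them, hence every member of the pencil is smooth at the base points, and by Bertini the general member is smooth everywhere, so the genus-one argument applies to it. This repair (or any substitute, such as the dualizing-sheaf form of Cayley--Bacharach for complete intersections) is absent from your proposal. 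In summary: the first route is a correct skeleton missing its body, and the second is an argument valid only away from the configurations --- reducible cubics --- that the paper actually needs.
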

    
This statement is applied to prove Pappus Theorem in the following way. Let $\Gamma_1$ be the union of lines $L(A_1,B_3) \cup L(A_2,B_1) \cup L(A_3,B_2)$ and let $\Gamma_2$ be the union of lines $L(A_1,B_2) \cup L(A_2,B_3) \cup L(A_3,B_1)$.
Then by construction

$$\Gamma_1 \cap \Gamma_2=\{A_1, A_2, A_3, B_1, B_2, B_3, C_1, C_2, C_3  \}.$$
Let $\Gamma$ be the union of lines $L_{A},L_{B}$ and $L(C_1, C_2)$. Theorem \ref{thm: Chasles} implies that it must be $C_3 \in \Gamma$. This implies in turn that the points $C_1, C_2, C_3$ are collinear as asserted. We denote this extra line by $L_{C}$.

\begin{figure}[ht]
	\centering
\definecolor{ffqqqq}{rgb}{1.,0.,0.}
\definecolor{uuuuuu}{rgb}{0.26666666666666666,0.26666666666666666,0.26666666666666666}
\definecolor{ccqqww}{rgb}{0.8,0.,0.4}
\begin{tikzpicture}[line cap=round,line join=round,>=triangle 45,x=0.7cm,y=0.7cm]
\clip(1.6609484780183619,-6.0384981054507705) rectangle (19.756186573256493,4.645321975795294);
\draw [line width=1.2pt, domain=3.1:16.756186573256493] plot(\x,{(-33.176428401696--2.9610389610389563*\x)/7.307359307359313});
\draw [line width=1.2pt,domain=5:13] plot(\x,{(-37.840752275046555--0.7965367965367953*\x)/6.025974025974031});
\draw [line width=0.6pt, dash pattern=on 2pt off 2pt, domain=3.2:10] plot(\x,{(-3.953751367589632-2.265849082223843*\x)/4.8850860133812795});
\draw [line width=0.6pt, dash pattern=on 2pt off 2pt ,domain=11.5:12.25] plot(\x,{(--56.134911855571836-4.8311688311688235*\x)/0.5714285714285712});
\draw [line width=0.6pt, dash pattern=on 2pt off 2pt, domain=5.5:17] plot(\x,{(--95.8387752792279-7.228210967212019*\x)/-9.404330561397519});
\draw [line width=0.6pt, dash pattern=on 2pt off 2pt,domain=5.5:12.5] plot(\x,{(--64.40800235516443-5.627705627705619*\x)/-5.4545454545454595});
\draw [line width=0.6pt, dash pattern=on 2pt off 2pt,domain=8.5:16.5] plot(\x,{(--94.94577781010634-6.827393382769199*\x)/-6.372058400830094});
\draw [line width=0.6pt, dash pattern=on 2pt off 2pt,domain=3.5:13] plot(\x,{(-14.036625333440707-1.8701298701298672*\x)/7.878787878787884});
\draw [line width=1.2pt,color=ffqqqq,domain=5.5:13.5] plot(\x,{(--26.54244430342639-1.590732564815187*\x)/-3.517123234995765});
\begin{scriptsize}
\draw [fill=ccqqww] (4.292983110052994,-2.8005654700921374) circle (2.5pt);
\draw[color=ccqqww] (4.414195231265121,-2.2) node {$A_1$};
\draw [fill=ccqqww] (11.600342417412307,0.16047349094681898) circle (2.5pt);
\draw[color=ccqqww] (11.721554538624442,0.7) node {$A_2$};
\draw [fill=ccqqww] (6.145796962866847,-5.4672321367588) circle (2.5pt);
\draw[color=ccqqww] (6.397009084078977,-5.8) node {$B_1$};
\draw [fill=ccqqww] (12.171770988840878,-4.670695340222005) circle (2.5pt);
\draw[color=ccqqww] (12.5,-5.2) node {$B_3$};
\draw [fill=ccqqww] (15.550127524264367,1.7609788304532188) circle (2.5pt);
\draw[color=ccqqww] (15.669606486676399,2.2) node {$A_3$};
\draw [fill=ccqqww] (9.178069123434273,-5.0664145523159805) circle (2.5pt);
\draw[color=ccqqww] (9.297312114382013,-5.6) node {$B_2$};
\draw [fill=uuuuuu] (7.354211785419602,-4.220454938886914) circle (3.0pt);
\draw[color=uuuuuu] (7.522420339490235,-4.8) node {$C_3$};
\draw [fill=uuuuuu] (11.876588775451257,-2.175063899746113) circle (3.0pt);
\draw[color=uuuuuu] (12.4,-2.5) node {$C_1$};
\draw [fill=uuuuuu] (8.359465540455492,-3.7657964645613) circle (3.0pt);
\draw[color=uuuuuu] (8.52674934381924,-4.3) node {$C_2$};
\end{scriptsize}
\end{tikzpicture}
	\caption{}
	\label{pappus}
\end{figure}

\section{A $(9_3)$ arrangement}

The configurations of $n$ points and $n$ lines with $k$ points per line and $k$ lines through each point are called in combinatorics as $(n_{k})$ arrangements. The Pappus configuration is a $(9_3)$ arrangement. 

Consider the incidence matrix of the Pappus arrangement.

\begin{table}[ht] $$
\begin{tabular}{|c|c|c|c|c|c|c|c|c|c|}
  \hline
    & $A_1$ & $A_2$ & $A_3$ & $B_1$ & $B_2$ & $B_3$ & $C_1$ & $C_2$ & $C_3$ \\
  \hline
  $L_{A}$ & + & + & + &  &  &  &  &  &   \\
  \hline
  $L_{B}$ &  &  &  & + & + &  +&  &  &    \\
  \hline
  $L_{C}$ &  &  &  &  &  &  & + & + & +  \\
  \hline
  $L(A_2,B_3)$ &  & + &  &  &  & + & + &  &   \\
  \hline
  $L(A_3,B_2)$ &  &  & + &  & + &  & + &  &  \\
  \hline
  $L(A_1,B_3)$ & + &  &  &  &  & + &  & + &   \\
  \hline
  $L(A_3,B_1)$ &  &  & + & + &  &  &  & + &   \\
  \hline
 $L(A_1,B_2)$ & + &  &  &  & + &  &  &  &  + \\
  \hline
  $L(A_2,B_1)$ &  & + &  & + &  &  & &  & +  \\

  \hline
\end{tabular}$$
\caption{: Incidence matrix of the Pappus arrangement}
\label{tab:config iv}
\end{table} 

The cross "+" in the row $L$ and column $X$ indicates that the line $L$ contains the point $X$. There are exactly $3$ of the distinguished points on each of the arrangement lines, and vice versa, each of distinguished points is contained in exactly $3$ of the arrangement lines.
Such an arrangement is called self-dual. Besides combinatorics, this name is justified by the following obvious observation.

For a point $X \in \mathbb{P}^2$, let $X^{*}$ denote the dual line in the dual projective space $(\mathbb{P}^2)^{V}$. Specifically, if $X=(a:b:c)$ and the coordinates in  $(\mathbb{P}^2)^{V}$ are $A,B,C$, then
$$X^{*}= \{ (A:B:C) \in  (\mathbb{P}^2)^{V} : aA+bB+cC=0 \} .$$
Similarly, if $Ax+By +Cz=0$ is a line $L \subset \mathbb{P}^2$, then $L^{*}$ is the dual point in  $(\mathbb{P}^2)^{V}$ with coordinates 
$$L^{*}= (A:B:C).$$
We have an immediate corollary.

\begin{theorem}\textbf{(dual Pappus statement)}
    Keeping the notation from Theorem \ref{hexagon}, the lines $A_1^{*}, A_2^{*}, A_3^{*}, B_1^{*}, B_2^{*}, B_3^{*}, C_1^{*}, C_2^{*}, C_3^{*}$ and points $L_{A}^{*}, L_{B}^{*}, L_{C}^{*}, L^{*}(A_2,B_3),$  $L^{*}(A_3,B_2), L^{*}(A_1,B_3), L^{*}(A_3,B_1),L^{*}(A_1,B_2), L^{*}(A_2,B_1) $ form a Pappus $(9_3)$ arrangement.
\end{theorem}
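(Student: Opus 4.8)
The plan is to derive the statement from the single structural fact underlying projective duality: incidence is preserved while the roles of points and lines are interchanged. Concretely, for a point $X=(a:b:c)$ and a line $L=\{Ax+By+Cz=0\}$ one has $X\in L$ if and only if $aA+bB+cC=0$, and by the definitions of $X^{*}$ and $L^{*}$ recalled above this is exactly the condition that $L^{*}=(A:B:C)$ lies on $X^{*}=\{aA+bB+cC=0\}$. Thus $X\in L\iff L^{*}\in X^{*}$. I would record this equivalence first, since the entire proof is a translation through it.

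Granting the equivalence, the nine dual lines $A_i^{*},B_j^{*},C_k^{*}$ and the nine dual points $L_A^{*},L_B^{*},L_C^{*},L^{*}(A_2,B_3),\dots,L^{*}(A_2,B_1)$ live in $(\P^2)^{V}$, and a dual point $L^{*}$ lies on a dual line $X^{*}$ precisely when the entry of Table~\ref{tab:config iv} in row $L$, column $X$ is a cross. Hence the incidence table of the dual arrangement is exactly the transpose of Table~\ref{tab:config iv}. In particular each of the nine dual lines carries exactly three dual points and each dual point lies on exactly three dual lines, so the dual arrangement is again a $(9_3)$ configuration.

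It remains to upgrade ``$(9_3)$ configuration'' to ``Pappus configuration,'' and here lies the only real content. Since the dual incidence structure is the transpose of Table~\ref{tab:config iv}, what must be shown is that the Pappus configuration is combinatorially self-dual, i.e.\ that there is a relabeling of the nine lines as points and the nine points as lines carrying the incidences of Table~\ref{tab:config iv} onto their transpose. I would produce this correspondence explicitly from the table, pairing the three axis lines $L_A,L_B,L_C$ and the six connecting lines with suitable points so that the defining collinearity pattern $i+j+k\equiv 0\pmod 3$ of Theorem~\ref{hexagon} is respected; this identifies the two axis lines and the two point-triples of the dual Pappus configuration, and its concluding collinearity then corresponds, through the equivalence above, to three lines of the original arrangement meeting in a point, which holds automatically since every point of a $(9_3)$ configuration lies on three lines. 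Because the dual objects are honest points and lines of the projective plane $(\P^2)^{V}$, this realizes the Pappus combinatorics geometrically, so the dual arrangement is a Pappus $(9_3)$ arrangement.

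The main obstacle is exactly this combinatorial self-duality: one must confirm that the relabeling lands on the Pappus incidence pattern rather than on one of the other abstract $(9_3)$ types, which amounts to checking that the color-swapping bijection honors the $i+j+k\equiv 0$ condition. As a more computational alternative, bypassing the self-duality discussion entirely, I could fix explicit coordinates for a general Pappus arrangement, compute the nine dual points and nine dual lines directly, and verify all the incidences together with the final collinearity by hand.
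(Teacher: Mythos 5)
Your argument follows the same route as the paper, which in fact prints no proof at all: it presents the statement as an immediate corollary of the duality dictionary $X\in L\iff L^{*}\in X^{*}$ and of Table~\ref{tab:config iv}, and this is precisely the content of your first two paragraphs. The added value of your write-up is that you isolate the one point the paper glosses over: a $(9_3)$ configuration need not be of Pappus type (the paper's own remark recalls Kantor's three combinatorial types), so one must actually exhibit a relabeling identifying the transposed incidence table with the Pappus pattern. Your sketch of that step is correct and easily completed; for the record, one may take $A_1^{*}$ and $B_1^{*}$ as the two dual axes (they share no configuration point, because $L(A_1,B_1)$ is not a configuration line) and relabel
$$A'_1=L_A^{*},\ A'_2=L^{*}(A_1,B_2),\ A'_3=L^{*}(A_1,B_3),\qquad B'_1=L_B^{*},\ B'_2=L^{*}(A_3,B_1),\ B'_3=L^{*}(A_2,B_1);$$
then the six joins $L(A'_i,B'_j)$ with $i\neq j$ are exactly $A_3^{*},A_2^{*},B_2^{*},B_3^{*},C_3^{*},C_2^{*}$, the three Pappus points come out as $L_C^{*}=C_2^{*}\cap C_3^{*}$, $L^{*}(A_2,B_3)=A_2^{*}\cap B_3^{*}$ and $L^{*}(A_3,B_2)=A_3^{*}\cap B_2^{*}$, and their collinearity is dual to the incidence of the three original lines $L_C$, $L(A_2,B_3)$, $L(A_3,B_2)$ at the point $C_1$, i.e.\ all three lie on $C_1^{*}$ --- exactly the ``automatic'' concurrence you invoke. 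So your proposal is sound and agrees with the paper's (implicit) argument; the only thing separating it from a finished proof is writing out this relabeling, which as submitted you leave as a plan.
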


\begin{remark}
It is well-known that there are combinatorially three distinct $(9_3)$ arrangements. It was firstly shown by Kantor in \cite{kantor}. One of them is being that of Pappus. As our focus is in geometry, we do not dwell on this aspect here.
\end{remark}

\section{Pappus lines}
Passing to the dual arrangement, it is easy to see that there is certain ambiguity in deciding which dual lines play the role of apparently distinguishes, lines $L_{A}$ and $L_{B}$. It turns out however that these lines are not special at all. We illustrate it with two examples.

\begin{example}
Consider the configuration of points as in Figure \ref{pappus} but let now relabel the points $C_{i}$ by $A_{i}$ (see Figure \ref{relabel}).

\begin{figure}[ht]
	\centering
\definecolor{ffqqqq}{rgb}{1.,0.,0.}
\definecolor{uuuuuu}{rgb}{0.26666666666666666,0.26666666666666666,0.26666666666666666}
\definecolor{ccqqww}{rgb}{0.8,0.,0.4}
\begin{tikzpicture}[line cap=round,line join=round,>=triangle 45,x=0.7cm,y=0.7cm]
\clip(1.6609484780183619,-6.0384981054507705) rectangle (19.756186573256493,4.645321975795294);
\draw [line width=1.2pt, domain=3.1:16.756186573256493] plot(\x,{(-33.176428401696--2.9610389610389563*\x)/7.307359307359313});
\draw [line width=1.2pt,domain=5:13] plot(\x,{(-37.840752275046555--0.7965367965367953*\x)/6.025974025974031});
\draw [line width=0.6pt, dash pattern=on 2pt off 2pt, domain=3.2:10] plot(\x,{(-3.953751367589632-2.265849082223843*\x)/4.8850860133812795});
\draw [line width=0.6pt, dash pattern=on 2pt off 2pt ,domain=11.5:12.25] plot(\x,{(--56.134911855571836-4.8311688311688235*\x)/0.5714285714285712});
\draw [line width=0.6pt, dash pattern=on 2pt off 2pt, domain=5.5:17] plot(\x,{(--95.8387752792279-7.228210967212019*\x)/-9.404330561397519});
\draw [line width=0.6pt, dash pattern=on 2pt off 2pt,domain=5.5:12.5] plot(\x,{(--64.40800235516443-5.627705627705619*\x)/-5.4545454545454595});
\draw [line width=0.6pt, dash pattern=on 2pt off 2pt,domain=8.5:16.5] plot(\x,{(--94.94577781010634-6.827393382769199*\x)/-6.372058400830094});
\draw [line width=0.6pt, dash pattern=on 2pt off 2pt,domain=3.5:13] plot(\x,{(-14.036625333440707-1.8701298701298672*\x)/7.878787878787884});
\draw [line width=1.2pt,color=ffqqqq,domain=5.5:13.5] plot(\x,{(--26.54244430342639-1.590732564815187*\x)/-3.517123234995765});
\begin{scriptsize}
\draw [fill=ccqqww] (4.292983110052994,-2.8005654700921374) circle (2.5pt);
\draw[color=ccqqww] (4.414195231265121,-2.2) node {$C_1$};
\draw [fill=ccqqww] (11.600342417412307,0.16047349094681898) circle (2.5pt);
\draw[color=ccqqww] (11.721554538624442,0.7) node {$C_2$};
\draw [fill=ccqqww] (6.145796962866847,-5.4672321367588) circle (2.5pt);
\draw[color=ccqqww] (6.397009084078977,-5.8) node {$B_1$};
\draw [fill=ccqqww] (12.171770988840878,-4.670695340222005) circle (2.5pt);
\draw[color=ccqqww] (12.5,-5.2) node {$B_3$};
\draw [fill=ccqqww] (15.550127524264367,1.7609788304532188) circle (2.5pt);
\draw[color=ccqqww] (15.669606486676399,2.2) node {$C_3$};
\draw [fill=ccqqww] (9.178069123434273,-5.0664145523159805) circle (2.5pt);
\draw[color=ccqqww] (9.297312114382013,-5.6) node {$B_2$};
\draw [fill=uuuuuu] (7.354211785419602,-4.220454938886914) circle (3.0pt);
\draw[color=uuuuuu] (7.522420339490235,-4.8) node {$A_3$};
\draw [fill=uuuuuu] (11.876588775451257,-2.175063899746113) circle (3.0pt);
\draw[color=uuuuuu] (12.4,-2.5) node {$A_1$};
\draw [fill=uuuuuu] (8.359465540455492,-3.7657964645613) circle (3.0pt);
\draw[color=uuuuuu] (8.52674934381924,-4.3) node {$A_2$};
\end{scriptsize}
\end{tikzpicture}
	\caption{}
	\label{relabel}
\end{figure}
Let as before
\begin{displaymath}
\begin{array}{ccl}
\vspace{0,5cm}
C_1 & = & L(A_2,B_3) \cap L(A_3,B_2),\\
\vspace{0,5cm}

C_2 & = & L(A_1,B_3) \cap L(A_3,B_1),\\
C_3 & = & L(A_1,B_2) \cap L(A_2,B_1). \\
\end{array}
\end{displaymath}  
Then we obtain exactly the same picture as before, with changed names of lines $L_{A}$ and $L_{C}$.

It may come as a surprise that one can start with arbitrary two lines (not intersecting in an arrangement point), choosing the labeling of points carefully. Consider for example the following relabeling:
\begin{displaymath}
\begin{array}{ccc}
\vspace{0,5cm}
B_1 \to A_1, & C_2 \to A_2, & A_3 \to A_3,\\
\vspace{0,5cm}

C_1 \to B_1, & A_2 \to B_2, & B_3 \to B_3.\\
\end{array}
\end{displaymath}  
Thus  the "old" line $L(B_1, A_3)$ becomes the "new" line $L_{A}$ and the "old" line $L(A_2, B_3)$ becomes the "new" line $L_{B}$. Moreover we have:
\begin{displaymath}
\begin{array}{ccl}
\vspace{0,5cm}
C_3 \to C_3& = & L(A_1,B_2) \cap L(A_2,B_1),\\
\vspace{0,5cm}

A_2 \to C_2 & = & L(A_2,B_3) \cap L(A_3,B_2),\\
B_2 \to C_1 & = & L(A_1,B_3) \cap L(A_3,B_1), \\
\end{array}
\end{displaymath} 
where the lines are taken with respect to the "new" labeling. Thus the "new" line $L_{C}$ is the "old" line $L(A_1, B_2)$. The figure remains unchanged.
\end{example}

It is natural to wonder what happens, if we are not careful labeling the points. We consider this question in the original picture. Let $\sigma=(a\, b\, c)$ be a permutation of the set $ \{ 1,2,3 \}$. By this notation we mean that $\sigma$ assigns $a$ to $1$ and similarly $b$ to $2$ and $c$ to $3$. Consider the initial figure with labeling as in Figure \ref{permutation}.

Thus the labeling in Figure \ref{pappus} corresponds to $\sigma = id$.

\begin{figure}[h]
	\centering
\definecolor{ffqqqq}{rgb}{1.,0.,0.}
\definecolor{uuuuuu}{rgb}{0.26666666666666666,0.26666666666666666,0.26666666666666666}
\definecolor{ccqqww}{rgb}{0.8,0.,0.4}
\begin{tikzpicture}[line cap=round,line join=round,>=triangle 45,x=0.7cm,y=0.7cm]
\clip(1.6609484780183619,-6.0384981054507705) rectangle (19.756186573256493,3.645321975795294);
\draw [line width=1.2pt, domain=3.1:16.756186573256493] plot(\x,{(-33.176428401696--2.9610389610389563*\x)/7.307359307359313});
\draw [line width=1.2pt,domain=5:13] plot(\x,{(-37.840752275046555--0.7965367965367953*\x)/6.025974025974031});
\begin{scriptsize}
\draw [fill=ccqqww] (4.292983110052994,-2.8005654700921374) circle (2.5pt);
\draw[color=ccqqww] (4.414195231265121,-2.2) node {$A_1$};
\draw [fill=ccqqww] (11.600342417412307,0.16047349094681898) circle (2.5pt);
\draw[color=ccqqww] (11.721554538624442,0.7) node {$A_2$};
\draw [fill=ccqqww] (6.145796962866847,-5.4672321367588) circle (2.5pt);
\draw[color=ccqqww] (6.397009084078977,-5.9) node {$B_{\sigma(1)}$};
\draw [fill=ccqqww] (12.171770988840878,-4.670695340222005) circle (2.5pt);
\draw[color=ccqqww] (12.5,-5.2) node {$B_{\sigma(3)}$};
\draw [fill=ccqqww] (15.550127524264367,1.7609788304532188) circle (2.5pt);
\draw[color=ccqqww] (15.669606486676399,2.2) node {$A_3$};
\draw [fill=ccqqww] (9.178069123434273,-5.0664145523159805) circle (2.5pt);
\draw[color=ccqqww] (9.297312114382013,-5.6) node {$B_{\sigma(2)}$};
\end{scriptsize}
\end{tikzpicture}
	\caption{}
	\label{permutation}
\end{figure}

\begin{example}
We consider now Figure  \ref{pappus} with the labeling on line $L_{B}$ changed by the permutation $\sigma = (2 \ 1 \ 3)$. We obtain then a new Pappus line $L_{C, \sigma}$ (see Figure \ref{sigma}).

\begin{figure}[h]
	\centering
\definecolor{ffqqqq}{rgb}{1.0,0.0,0.0}
\definecolor{uuuuuu}{rgb}{0.26666666666666666,0.26666666666666666,0.26666666666666666}
\definecolor{ccqqww}{rgb}{0.8,0.0,0.4}
\begin{tikzpicture}[line cap=round,line join=round,>=triangle 45,x=0.7cm,y=0.7cm]
\clip(3.0400000000000007,-10.08280000000000003) rectangle (16.680000000000017,3.58);
\draw [line width=1.2000000000000002pt,domain=3.3400000000000007:16.680000000000017] plot(\x,{(-51.0904--4.56*\x)/11.260000000000002});
\draw [line width=1.2000000000000002pt,domain=4.6400000000000007:14.680000000000017] plot(\x,{(-37.84939999999999--0.7999999999999998*\x)/6.02});
\draw [dash pattern=on 2pt off 2pt,domain=3.5400000000000007:8.680000000000017] plot(\x,{(-6.2463000000000015--2.67*\x)/-1.8600000000000003});
\draw [dash pattern=on 2pt off 2pt,domain=7.063400000000000007:12.380000000000017] plot(\x,{(--60.280800000000006-5.23*\x)/-2.42});
\draw [dash pattern=on 2pt off 2pt,domain=8.3400000000000007:16.680000000000017] plot(\x,{(--94.99530000000001-6.83*\x)/-6.370000000000001});
\draw [dash pattern=on 2pt off 2pt,domain=3.3400000000000007:13.680000000000017] plot(\x,{(-14.041700000000002-1.87*\x)/7.88});
\draw [dash pattern=on 2pt off 2pt,domain=11.4400000000000007:12.3680000000000017] plot(\x,{(--56.1192-4.83*\x)/0.5700000000000003});
\draw [dash pattern=on 2pt off 2pt,domain=5.3400000000000007:16.680000000000017] plot(\x,{(--95.8825-7.2299999999999995*\x)/-9.4});
\draw [line width=1.2000000000000002pt,color=ffqqqq,domain=7.3400000000000007:12.580000000000017] plot(\x,{(-46.748034563980426--3.7623830225534087*\x)/2.167839828046496});
\begin{scriptsize}
\draw [fill=ccqqww] (4.29,-2.8) circle (2.5pt);
\draw[color=ccqqww] (4.320000000000003,-2.420000000000002) node {$A_1$};
\draw [fill=ccqqww] (11.6,0.16) circle (2.5pt);
\draw[color=ccqqww] (11.174000000000001,0.43999999999999906) node {$A_2$};
\draw [fill=ccqqww] (15.55,1.76) circle (2.5pt);
\draw[color=ccqqww] (15.2680000000000016,2.0399999999999996) node {$A_3$};
\draw [fill=ccqqww] (9.18,-5.07) circle (2.5pt);
\draw[color=ccqqww] (9.032000000000001,-4.780000000000003) node {$B_1$};
\draw [fill=ccqqww] (6.15,-5.47) circle (1.5pt);
\draw[color=ccqqww] (6.280000000000005,-5.180000000000003) node {$B_2$};
\draw [fill=ccqqww] (12.17,-4.67) circle (2.5pt);
\draw[color=ccqqww] (12.400000000000011,-4.280000000000003) node {$B_3$};
\draw [fill=uuuuuu] (7.859457786672598,-7.923899080868727) circle (3.0pt);
\draw[color=uuuuuu] (7.2040000000000006,-7.7400000000000045) node {$C_3$};
\draw [fill=uuuuuu] (10.027297614719094,-4.161516058315319) circle (3.0pt);
\draw[color=uuuuuu] (10.200000000000008,-4.62800000000000025) node {$C_2$};
\draw [fill=uuuuuu] (11.755594964774016,-1.158462596242965) circle (3.0pt);
\draw[color=uuuuuu] (12.294000000000001,-1.1800000000000015) node {$C_1$};
\end{scriptsize}
\end{tikzpicture}
	\caption{}
	\label{sigma}
\end{figure}
\end{example}

If the initial data, i.e., the points $A_1, A_2, A_3, B_1, B_2, B_3$ is sufficiently general, then there are six distinct Pappus lines indexed by permutations $\sigma \in S_3$. The following fact is, probably, well-known but we were not able to trace it down in the literature.

\begin{theorem}\label{dualthm}
    The dual points $L_{C, \sigma}^{*}$ with $\sigma \in S_3$ lie by three on a pair of lines in $(\mathbb{P}^2)^{V}$. The Pappus lines determined by these six points correspond to new triples of points  $A'_1, A'_2, A'_3$ and $ B'_1, B'_2, B'_3$ on the initial lines $L_A$ and $L_B$.
\end{theorem}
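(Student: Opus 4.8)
The plan is to turn both assertions into linear algebra over explicit coordinates. First I would normalize so that $L_A=\{y=0\}$ and $L_B=\{x=0\}$ meet at $(0:0:1)$, write $A_i=(a_i:0:1)$ and $B_j=(0:b_j:1)$, and set $p_i=1/a_i$, $q_j=1/b_j$; then the connecting line $L(A_i,B_j)$ has the clean equation $p_i x+q_j y=z$. Solving the three $2\times 2$ systems for the points $C_k$ and eliminating, one finds that the Pappus line attached to the labeling $\sigma$ is $\lambda x+\mu y-z=0$ with $\lambda=N^\lambda/M$ and $\mu=N^\mu/M$, so that its dual point is $L_{C,\sigma}^{*}=(N^\lambda:N^\mu:-M)$, where $N^\lambda,N^\mu,M$ are explicit expressions in $p_1,p_2,p_3$ and in the $\sigma$-permuted triple $(q_1,q_2,q_3)$. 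Crucially, $\sigma$ acts only on the $q$'s, the $p_i$ staying fixed.

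For the first assertion, the decisive structural remark is that the three \emph{even} permutations produce exactly the three cyclic shifts of $(q_1,q_2,q_3)$, and the three \emph{odd} permutations the three cyclic shifts of $(q_2,q_1,q_3)$. Each of the three coordinate functions is, as a function of the $q$-vector, a form whose coefficients sum to zero: $N^\lambda$ and $M$ are linear, with coefficients of the shape $p_i(p_j-p_k)$ respectively $(p_j-p_k)$, while $N^\mu$ is bilinear in the products $q_iq_j$ with coefficients that are differences of the $p_i$. Summing such a form over a full cyclic orbit replaces every coefficient by the common symmetric sum and hence telescopes to zero. Therefore, with the representatives above, $D_0+D_1+D_2=0$ for the even triple and $D_0'+D_1'+D_2'=0$ for the odd triple; three vectors in $\mathbb{C}^3$ with vanishing sum are linearly dependent, so each triple of dual points is collinear. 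This produces the asserted pair of lines $\ell_+,\ell_-\subset(\mathbb{P}^2)^V$, and a genericity check on the initial data guarantees that the six points are distinct and the two lines are distinct.

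For the second assertion I would first reduce it to a single identification. The six dual points now lie three-and-three on $\ell_+$ and $\ell_-$, so they themselves form a Pappus configuration in $(\mathbb{P}^2)^V$. Since the first assertion is a projective statement, I may apply it verbatim to this configuration: its six Pappus lines split into two concurrent triples, meeting in two points $E_+,E_-\in(\mathbb{P}^2)^V$. Dualizing once more, these six second-generation Pappus lines correspond to six points of the original plane lying three-and-three on the lines $E_+^{*}$ and $E_-^{*}$. Thus the whole second assertion is equivalent to the identification $E_+=L_B^{*}=(1:0:0)$ and $E_-=L_A^{*}=(0:1:0)$; once this holds, the six new points lie three on $L_B$ and three on $L_A$ and, sitting on generic lines, furnish the claimed triples $B_j'$ and $A_i'$.

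To pin down the concurrency points it suffices, given the concurrency already established, to show that $(1:0:0)$ lies on two same-triple second Pappus lines and that $(0:1:0)$ lies on two of the complementary triple, since two lines of one triple determine its common point. Concretely I would compute the second Pappus line $\Lambda_{\mathrm{id}}$ obtained from the identity matching of $\ell_+$ and $\ell_-$ and one line $\Lambda$ obtained from a transposition matching, carrying the $D_\sigma$ in their normalized $(N^\lambda:N^\mu:-M)$ form to keep the chain of cross products finite, and verify that the $A$-coefficient of $\Lambda_{\mathrm{id}}$ vanishes while the $B$-coefficient of $\Lambda$ vanishes, i.e. that $\Lambda_{\mathrm{id}}$ passes through $(1:0:0)$ and $\Lambda$ through $(0:1:0)$. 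I expect this to be the main obstacle: it is the one genuinely multilinear step, and the difficulty is to organize the computation so that the high-degree coefficients collapse. The likely key is to reuse the same coefficient-sum-zero identities rather than to expand blindly; a single worked numerical instance already returns the target values $(1:0:0)$ and $(0:1:0)$, which serves as a sanity check while locating the clean cancellation.
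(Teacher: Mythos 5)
Your proof of the \emph{first} assertion is correct and takes a genuinely different route from the paper's. The paper fixes the normalization $A_1=(1:0:0)$, $A_3=(0:1:0)$, $B_2=(0:0:1)$, $B_3=(1:1:1)$, $A_2=(1:a:0)$, $B_1=(1:1:b)$, computes all six lines $L_{C,\sigma}$ and their dual points explicitly in the parameters $a,b$, and then exhibits the two lines $M_1,M_2$ by inspection. You replace that verification-by-inspection with a structural reason: in your normalization $L(A_i,B_j):p_ix+q_jy=z$ one indeed finds
$M=q_1(p_2-p_3)+q_2(p_3-p_1)+q_3(p_1-p_2)$,
$N^\lambda=p_1q_1(p_2-p_3)+p_2q_2(p_3-p_1)+p_3q_3(p_1-p_2)$,
$N^\mu=q_1q_2(p_2-p_1)+q_2q_3(p_3-p_2)+q_3q_1(p_1-p_3)$,
each a form in the $q$-monomials whose coefficients sum to zero; since the even (resp.\ odd) permutations act on the $q$-vector exactly as the cyclic shifts of $(q_1,q_2,q_3)$ (resp.\ of $(q_2,q_1,q_3)$), the three representatives $(N^\lambda,N^\mu,-M)$ within each parity class sum to the zero vector, which forces collinearity of each triple of dual points. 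I checked these identities; the argument is sound, is cleaner than the paper's explicit computation, and it \emph{explains} (rather than merely records) the even/odd splitting that the paper states in the corollary following the theorem.

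The \emph{second} assertion is where the proposal has a genuine gap. Your reduction is sound: applying the first assertion in $(\mathbb{P}^2)^{V}$ makes the two parity triples of second-generation Pappus lines concurrent, at points $E_+,E_-$, and the assertion amounts to the identification $\{E_+,E_-\}=\{L_A^{*},L_B^{*}\}$, i.e.\ $(0:1:0)$ and $(1:0:0)$ in your coordinates. Your stated criterion is also the right one: you need \emph{two} lines of one triple through the named point, since two (distinct) lines of a concurrent triple pin down its common point. But your concrete plan does not meet this criterion: $\Lambda_{\mathrm{id}}$ lies in the even triple while a transposition line $\Lambda$ lies in the odd triple, so verifying that $\Lambda_{\mathrm{id}}$ passes through $(1:0:0)$ and $\Lambda$ through $(0:1:0)$ yields only one incidence per triple, and a single line through a point never identifies that point as the triple's common point --- $E_+$ could a priori be any other point of $\Lambda_{\mathrm{id}}$. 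You would need the coefficient vanishing for two even lines and, separately, for two odd lines (or simply all six, as the paper effectively does). Moreover, that vanishing --- the one genuinely computational step --- is left uncompleted: you offer a numerical instance and an expectation that the cancellation can be organized, whereas this is exactly the content the paper supplies by listing the coordinates of all six dual points $M^{*}_{C,\sigma}$ and checking that three of them satisfy its equation of $L_B$ ($x-y=0$) and three satisfy its equation of $L_A$ ($z=0$). Until the analogous symbolic verification is carried out for at least two lines in each triple, the second half of the theorem remains unproved.
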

\begin{proof}
Our approach is algebraic and motivated by Tao's survey \cite{Tao2014}. Without loss of generality we may assume that
$$A_1=(1:0:0), \hspace{0.3cm} A_3=(0:1:0), \hspace{0.3cm} B_2=(0:0:1), \hspace{0.3cm} B_3=(1:1:1).$$
Thus $L_{A}: z=0 $ and $L_{B}:x-y=0$. For the remaining points we introduce parameters $a,b \in \mathbb{C} \setminus \{0,1\}$:
$$A_2=(1:a:0) \hspace{0.3cm} \textrm{and} \hspace{0.3cm} B_1= (1:1:b).$$
These assumptions are sufficient to determine all relevant lines $L(A_{i}, B_{j})$ for $i,j \in \{1,2,3\}$:
\begin{figure}[ht]
	\centering 
	\begin{minipage}[b]{0.47\linewidth}
	$$\begin{array}{ccl}
\vspace{0,5cm}
L(A_1, B_1) & : & by-z=0,\\ 
\vspace{0,5cm}
L(A_1, B_2) & : &  y= 0, \\
\vspace{0,5cm}
L(A_1,B_3) & : &  y-z =0, \\
\vspace{0,5cm}
L(A_3,B_1) & : & bx-z=0,\\ 

L(A_3,B_2) & : &  x=0.\\
\end{array}$$
	\end{minipage}
	\quad
	\begin{minipage}[b]{0.47\linewidth}
$$\begin{array}{ccl}
\vspace{0,5cm}
L(A_2,B_1) & : & abx-by+(1-a)z=0,\\ 
\vspace{0,5cm}
L(A_2,B_2) & : &  ax-y=0,\\
\vspace{0,5cm}
L(A_2,B_3) & : &  ax-y+(1-a)z=0,\\
\vspace{0,5cm}
L(A_3,B_3) & : & x-z=0,\\ 
 & & \\
 \end{array}$$
	\end{minipage}
\end{figure}

\hspace{0.2cm}

Their intersection points are:
\begin{displaymath}
\begin{array}{ccl}
\vspace{0,5cm}
C_1, \sigma & = & L(A_2,B_{\sigma(3)}) \cap L(A_3,B_{\sigma(2)}),\\
\vspace{0,5cm}

C_2, \sigma & = & L(A_1,B_{\sigma(3)}) \cap L(A_3,B_{\sigma(1)}),\\
C_3, \sigma & = & L(A_1,B_{\sigma(2)}) \cap L(A_2,B_{\sigma(1)}), \\
\end{array}
\end{displaymath} 
with $\sigma \in S_3$. Let us notice that the permutations $\tau_2 =(2 \ 1 \ 3)$ and $\tau_3 =(3 \ 1 \ 2)$ generate the symmetric group $S_3$. We have the following coordinates of points $C_{k, \sigma}$ for $\sigma \in S_3$ and $k \in \{1,2,3 \}$:

\begin{figure}[ht]
	\centering
	\begin{minipage}[b]{0.47\linewidth}
$$\begin{array}{ccl}
\vspace{0,5cm}
C_{1,id} & = &  (0:1-a:1),\\
\vspace{0,5cm}
C_{2,id} & = &  (1:b:b) ,\\
\vspace{1cm}
C_{3, id} & = & (a-1:0:ab), \\
\vspace{0,5cm}
C_{1,  \tau_3} & = &  (1:a:b),\\
\vspace{0,5cm}
C_{2, \tau_3} & = &  (1:0:1), \\
\vspace{1cm}
C_{3,  \tau_3} & = &  (ab-b+1:a:ab), \\
\vspace{0.5cm}
C_{1,\tau_3^2} & = &  (b:ab-a+1:b),\\
\vspace{0,5cm}
C_{2,\tau_3^2} & = &  (0:1:b) ,\\
C_{3, \tau_3^2} & = &  (1:a:a), \\
\end{array}$$
	\end{minipage}
	\quad
	\begin{minipage}[b]{0.47\linewidth}
$$\begin{array}{ccl}
\vspace{0,5cm}
C_{1,\tau_2} & = & (1:a+b-ab:b),\\
\vspace{0,5cm}
C_{2,\tau_2} & = & (0:1:1), \\
\vspace{1cm}
C_{3,\tau_2} & = & (1:a:ab),\\
\vspace{0,5cm}
C_{1,\tau_2 \tau_3} & = &  (0:1-a:b), \\
\vspace{0.5cm}
C_{2,\tau_2 \tau_3} & = &  (b:1:b) ,\\
\vspace{1cm}
C_{3, \tau_2 \tau_3 } & = &  (a-1:0:a),\\
\vspace{0,5cm}
C_{1,\tau_2 \tau_3^2} & = &  (1:a:1),\\
\vspace{0,5cm}
C_{2,\tau_2 \tau_3^2} & = &  (1:0:b) ,\\
C_{3, \tau_2 \tau_3^2} & = &  (b+a-1:ab:ab). \\
\end{array}$$
	\end{minipage}
\end{figure}

Thus we have an arrangement of $9$ lines $L(A_{i}, B_{j})$ and altogether $24$ points where they intersect. In six of these $24$ points three arrangement lines meet (they are $A_1, A_2, A_3, B_1, B_2, B_3$) and the remaining points $C_{i, \sigma}$ are intersection points of exactly two lines. 

For any collection of $n$ distinct lines in $\mathbb{P}^2$ we have the following formula concerning the number of intersection points with their multiplicities

\begin{equation} \label{multiple}
    \binom{n}{2}= {\sum}_{k\geq 2} t_{k} \cdot \binom{k}{2},
\end{equation}

\vspace{0.3cm}
\noindent where $t_k$ is the number of points, where exactly $k$ lines meet. By (\ref{multiple}) we conclude that there does not occur  any additional incidences between these $9$ lines. But there are additional collinearities of points $C_{1,\sigma}, C_{2,\sigma}, C_{3,\sigma}$ for all $\sigma \in S_3$:

$$\begin{array}{lcl}
\vspace{0,5cm}
L_{C, id} & : & abx-y-(a-1)z=0,\\
\vspace{0,5cm}
L_{C, \tau_2} & : & a(1-b)x-y+z=0,\\
\vspace{0,5cm}
L_{C, \tau_3} & : & ax+(b-1)y-az=0,\\
\vspace{0,5cm}
L_{C, \tau_2 \tau_3} & : & ax-by-(a-1)z=0, \\
\vspace{0,5cm}
L_{C, \tau_3^2}  & : & a(1-b)x+by-z=0,\\
L_{C, \tau_2 \tau_3^2} & : &  abx+(1-b)y -az=0. \\
\end{array}$$

\noindent
They determine six points $L_{C, \sigma}^{*}$ in $(\mathbb{P}^2)^{V}$:

\begin{figure}[ht]
	\centering
	\begin{minipage}[b]{0.47\linewidth}
$$\begin{array}{lcl}
\vspace{0,5cm}
L_{C, id}^{*} & = &  (ab:-1:1-a),\\
\vspace{0,5cm}
L_{C, \tau_3}^{*} & = &  (a:b-1:-a),\\
L_{C, \tau_3^2}^{*}  & = &  (a-ab:b:-1) , \\
\end{array}$$
	\end{minipage}
	\quad
	\begin{minipage}[b]{0.47\linewidth}
$$\begin{array}{lcl}
\vspace{0,5cm}
L_{C, \tau_2}^{*} & = &  (a-ab:-1:1) ,\\
\vspace{0,5cm}
L_{C, \tau_2 \tau_3}^{*} & = &  (a:-b:1-a),\\
L_{C, \tau_2 \tau_3^2}^{*} & = &  (ab:1-b:-a). \\
\end{array}$$
	\end{minipage}
\end{figure}

It is now easy to check that these points lie on two lines $M_1, M_2$:
\vspace{0,3cm}
$$\begin{array}{lcl}
\vspace{0,5cm}
M_1  & : &  (ab-b+1)\,x+a(ab-a+1)\,y+a(b^2-b+1)\,z=0,\\
M_2 & : &  (a+b-1)\,x+a(a+b-ab)\,y+a(b^2-b+1)\,z=0. \\
\end{array}$$
\begin{figure}[h]
	\centering
\definecolor{ffqqqq}{rgb}{1.,0.,0.}
\definecolor{uuuuuu}{rgb}{0.26666666666666666,0.26666666666666666,0.26666666666666666}
\definecolor{ccqqww}{rgb}{0.8,0.,0.4}
\begin{tikzpicture}[line cap=round,line join=round,>=triangle 45,x=0.7cm,y=0.7cm]
\clip(-0.06609484780183619,-5.8050384981054507705) rectangle (19.756186573256493,2.80645321975795294);
\draw [line width=1.2pt, domain=3.1:16.756186573256493] plot(\x,{(-33.176428401696--2.9610389610389563*\x)/7.307359307359313});
\draw[color=black] (2.54,-3.27) node {$M_1$};
\draw[color=black] (4.44,-5.57) node {$M_2$};
\draw [line width=1.2pt,domain=5:15] plot(\x,{(-37.840752275046555--0.7965367965367953*\x)/6.025974025974031});
\begin{scriptsize}
\draw [fill=ccqqww] (4.292983110052994,-2.8005654700921374) circle (2.5pt);
\draw[color=ccqqww] (4.414195231265121,-2.2) node {$L_{C, id}^{*}$};
\draw [fill=ccqqww] (11.600342417412307,0.16047349094681898) circle (2.5pt);
\draw[color=ccqqww] (11.721554538624442,0.7) node {$L_{C, \tau_3}^{*}$};
\draw [fill=ccqqww] (6.145796962866847,-5.4672321367588) circle (2.5pt);
\draw[color=ccqqww] (6.397009084078977,-5.0) node {$L_{C, \tau_2}^{*}$};
\draw [fill=ccqqww] (12.171770988840878,-4.670695340222005) circle (2.5pt);
\draw[color=ccqqww] (12.5,-4.1) node {$L_{C, \tau_2 \tau_3^2}^{*}$};
\draw [fill=ccqqww] (15.550127524264367,1.7609788304532188) circle (2.5pt);
\draw[color=ccqqww] (15.69606486676399,2.4) node {$L_{C, \tau_3^2}^{*}$};
\draw [fill=ccqqww] (9.178069123434273,-5.0664145523159805) circle (2.5pt);
\draw[color=ccqqww] (9.297312114382013,-4.5) node {$L_{C, \tau_2 \tau_3}^{*}$};
\end{scriptsize}
\end{tikzpicture}
	\caption{}
	\label{dual}
\end{figure}

The data in Figure \ref{dual} is exactly the initial data in the Pappus Theorem. This data determines six Pappus lines $M_{C,\sigma}$ with $\sigma \in S_3$. Since all calculations are parallel to those presented in the first part of this proof we omit them and conclude establishing the coordinates of their dual points:

\begin{figure}[ht]
	\centering
	\begin{minipage}[b]{0.47\linewidth}
$$\begin{array}{lcl}
\vspace{0,5cm}
M_{C, id}^{*} & = &  (b-2:b-2:2b^2 -2b-1),\\
\vspace{0,5cm}
M_{C, \tau_3}^{*} & = &(b+1:b+1:-b^2+4b-1),\\
M_{C, \tau_3^2}^{*}  & = &(2b-1:2b-1:b^2+2b-2), \\
\end{array}$$
	\end{minipage}
	\quad
	\begin{minipage}[b]{0.47\linewidth}
$$\begin{array}{lcl}
\vspace{0,5cm}
\hspace{0.5cm} M_{C, \tau_2}^{*} & = & (2a-1:a^2 -2a:0),\\
\vspace{0,5cm}
\hspace{0.5cm} M_{C, \tau_2 \tau_3}^{*} & = &(a+1:2a^2-a:0),\\
\hspace{0.5cm} M_{C, \tau_2 \tau_3^2}^{*} & = &(a-2:-a^2-a:0). \\
\end{array}$$
	\end{minipage}
\end{figure}
They lie on lines $L_A$ and $L_B$ as follows 
$$\begin{array}{lcl}
\vspace{0,5cm}
M_{C, id}^{*}, \hspace{0.3cm} M_{C, \tau_3}^{*}, \hspace{0.3cm} M_{C, \tau_3^2}^{*} & \in & L_B,\\
M_{C, \tau_2}^{*}, \hspace{0.3cm}  M_{C, \tau_2 \tau_3}^{*},  \hspace{0.3cm} M_{C, \tau_2 \tau_3^2}^{*}   & \in & L_A.\\
\end{array}$$
\end{proof}

\begin{corollary}
It follows immediately from the proof of Theorem \ref{dualthm} that the points dual to lines $L_{C, \sigma}$ determined by even permutations lie on the line $M_1$, whereas those corresponding to odd permutations are on the line  $M_2$.
\end{corollary}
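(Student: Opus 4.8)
The plan is to make everything explicit in suitable projective coordinates and then reduce the two assertions to finite symbolic verifications, organised so as to exploit the $S_3$-symmetry of the construction.

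First I would normalise. The group of projective automorphisms of $\P^2$ acts transitively enough that four points in general position can be sent to a standard frame; since $A_1,A_3$ are constrained to $L_A$ and $B_2,B_3$ to $L_B$, I would fix $A_1=(1:0:0)$, $A_3=(0:1:0)$, $B_2=(0:0:1)$, $B_3=(1:1:1)$, so that $L_A\colon z=0$ and $L_B\colon x-y=0$, and retain two essential parameters $a,b$ through $A_2=(1:a:0)$ and $B_1=(1:1:b)$. Each of the nine arrangement lines $L(A_i,B_j)$ is then cut out by a single $2\times 2$ determinant, and I would record their equations once and for all.

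Next, for each $\sigma\in S_3$ I would intersect the relevant pairs of lines to obtain the three points $C_{1,\sigma},C_{2,\sigma},C_{3,\sigma}$, and take the line $L_{C,\sigma}$ through them. Because $\tau_2=(2\,1\,3)$ and $\tau_3=(3\,1\,2)$ generate $S_3$, I would compute along these generators to keep the bookkeeping systematic, and then dualise to obtain the six points $L_{C,\sigma}^{*}$. By the Pappus Theorem (Theorem~\ref{hexagon}) each triple $C_{1,\sigma},C_{2,\sigma},C_{3,\sigma}$ is genuinely collinear, so each $L_{C,\sigma}$ is well defined and $L_{C,\sigma}^{*}$ is a bona fide point.

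For the first assertion I would observe that, under duality, the claim ``the six points $L_{C,\sigma}^{*}$ lie by three on two lines $M_1,M_2$'' is equivalent to ``the six Pappus lines $L_{C,\sigma}$ pass by three through two points $M_1^{*},M_2^{*}$.'' Thus I would exhibit candidate linear forms $M_1,M_2$ and check that the three dual points attached to even permutations satisfy $M_1$ while those attached to odd permutations satisfy $M_2$ (equivalently, that the two triples of Pappus lines are concurrent); this is a direct substitution into two equations, with the splitting occurring precisely according to the sign of $\sigma$. Finally, reading the six points as a fresh Pappus initial datum on the pair $M_1,M_2$, I would apply the Pappus Theorem a second time, compute the new dual points $M_{C,\sigma}^{*}$ (points of the original plane), and verify that three of them have vanishing third coordinate, hence lie on $L_A\colon z=0$, while the other three satisfy $x=y$, hence lie on $L_B\colon x-y=0$; this is exactly the asserted return to the initial lines.

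The bulk of the work is symbolic but routine; the delicate points are two. The first is the reformulation above: recognising that the incidence ``three dual points on a line'' is the concurrence of three Pappus lines lets one both predict and verify the splitting by the parity of $\sigma$. The second, and the real obstacle, is the matching problem in the second application of Pappus: one must assign the six points $L_{C,\sigma}^{*}$ to the roles of $A_i'$ and $B_j'$ so that the hexagonal labelling convention $i+j+k\equiv 0\pmod 3$ is respected and the construction closes up with its output on $L_A$ and $L_B$. Getting this correspondence right---rather than merely producing two lines through six points---is where care is needed; once the labelling is fixed, the concluding collinearities are again a finite check.
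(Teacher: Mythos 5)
Your proposal is correct and follows essentially the same route as the paper: the corollary is, just as you say, a direct substitution check that the explicitly computed dual points $L_{C,\sigma}^{*}$ (in the normalized coordinates $A_1=(1:0:0)$, $A_3=(0:1:0)$, $B_2=(0:0:1)$, $B_3=(1:1:1)$, $A_2=(1:a:0)$, $B_1=(1:1:b)$) satisfy the equation of $M_1$ exactly when $\sigma$ is even and that of $M_2$ exactly when $\sigma$ is odd, which is precisely how the paper reads the statement off from the proof of Theorem \ref{dualthm}. Note that the latter part of your plan (the second application of Pappus and the matching of the points $M_{C,\sigma}^{*}$ back onto $L_A$ and $L_B$) belongs to Theorem \ref{dualthm} itself and is not needed for this corollary.
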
{}

\begin{remark}
Although the equations of lines in the arrangement depend on two parameters ($a$ and $b$) after the double dualisation the coordinates of points lying back on the line $L_A$ depend only on the parameter $a$, and respectively the coordinates of points lying back on the line $L_B$ depend only on the parameter $b$. It is a quite mysterious phenomenon for which we have no explanation at the moment.
\end{remark}

\section{Additional incidences}\label{sec: additional incidences}
In this section we study the question which initial data, i.e. points $A_1, A_2, A_3, B_1,$ $B_2, B_3$ lead to Pappus lines passing through the intersection point of the initial lines 
$$S=L_{A} \cap L_{B}=(1:1:0)$$ 
(as in Figure \ref{pekowe}). 
In particular we show that this additional incidence may happen only for at most two lines among the six lines $L_{C, \sigma}$ with $\sigma\in S_3$, under the assumption that all these six lines are mutually distinct.

\begin{figure}[ht]
	\centering
\definecolor{uuuuuu}{rgb}{0.26666666666666666,0.26666666666666666,0.26666666666666666}
\definecolor{ffqqqq}{rgb}{1.0,0.0,0.0}
\definecolor{ccqqqq}{rgb}{0.8,0.0,0.0}
\begin{tikzpicture}[line cap=round,line join=round,>=triangle 45,x=1.3cm,y=1.2cm]
\clip(-2.9537930699128174,-4.803663487474415) rectangle (5.083120208313234,4.572735337122672);
\draw [line width=1.2pt,domain=-2.9537930699128174:5.083120208313234] plot(\x,{(-0.0--4.0*\x)/2.0});
\draw [line width=1.2pt] (1.0,-3) -- (1.0,2.5);
\draw [line width=0.6pt,dash pattern=on 2pt off 2pt,domain=-0.5:4.5] plot(\x,{(-0.0-0.0*\x)/1.0});
\draw [line width=0.6pt,dash pattern=on 2pt off 2pt,domain=-2.2:4.5] plot(\x,{(--8.0-2.0*\x)/-3.0});
\draw [line width=0.6pt,dash pattern=on 2pt off 2pt,domain=0.8:2.5] plot(\x,{(--8.0-6.0*\x)/-1.0});
\draw [line width=0.6pt,dash pattern=on 2pt off 2pt,domain=-0.8:1.9] plot(\x,{(-0.0-1.0*\x)/-1.0});
\draw [line width=0.6pt,dash pattern=on 2pt off 2pt,domain=-2.9537930699128174:1.7] plot(\x,{(--2.0-5.0*\x)/-3.0});
\draw [line width=0.6pt,dash pattern=on 2pt off 2pt,domain=0.8:5.083120208313234] plot(\x,{(-4.0--4.0*\x)/1.0});
\draw [line width=1.2pt,color=ffqqqq,domain=0.5:4.6] plot(\x,{(-8.0--2.0*\x)/-3.0});
\begin{scriptsize}
\draw [fill=ccqqqq] (0.0,0.0) circle (2pt);
\draw[color=ccqqqq] (-0.3,0.18657678331955307) node {$B_{\sigma(1)}$};
\draw [fill=ccqqqq] (1.0,0.0) circle (2pt);
\draw[color=ccqqqq] (0.8,0.18657678331955307) node {$A_2$};
\draw [fill=ccqqqq] (1.0,1.0) circle (2pt);
\draw[color=ccqqqq] (0.8,1.1846248374783466) node {$A_3$};
\draw [fill=ccqqqq] (1.0,-2.0) circle (2pt);
\draw[color=ccqqqq] (0.8,-1.8226515362369655) node {$A_1$};
\draw [fill=ccqqqq] (-2.0,-4.0) circle (2pt);
\draw[color=ccqqqq] (-2.3,-3.8187476445545525) node {$B_{\sigma(2)}$};
\draw [fill=ccqqqq] (2.0,4.0) circle (2pt);
\draw[color=ccqqqq] (1.6,4.178768999954727) node {$B_{\sigma(3)}$};
\draw [fill=ffqqqq] (1.0,2.0) circle (3pt);
\draw[color=ffqqqq] (0.9,2.2352017365928663) node {$S$};
\draw [fill=black] (4.0,-0.0) circle (2pt);
\draw[color=black] (4.0,0.22597341703634755) node {$C_{1, \sigma}$};
\draw [fill=black] (1.6,1.6) circle (2pt);
\draw[color=black] (1.7212741311467814,1.28) node {$C_{3, \sigma}$};
\draw [fill=black] (1.4285714285714286,1.7142857142857142) circle (2pt);
\draw[color=black] (1.35,2) node {$C_{2, \sigma}$};
\end{scriptsize}
\end{tikzpicture}
	\caption{}
	\label{pekowe}
\end{figure}

Since on each of the initial lines $L_A$ and $L_B$ there are four distinguished points: the three arrangement points and the point $S$, we can compute their cross-ratio and this is the invariant we want now to bring in the game. Moreover, if the Pappus line $L_C$ passes through $S$, then one can naturally compute the cross-ratio of the four distinguished points on this line.

Let us firstly remind the how this invariant of projective transformations is computed
and what basic properties it enjoys.

A convenient way to calculate the cross-ratio of four collinear points $A, B, C, D \in \mathbb{P}^2$ 
is given by the determinantal formula
$$[A,B;C,D]= \frac{[\mathcal{O},A,C] \cdot [\mathcal{O},B,D]}{[\mathcal{O},A,D] \cdot [\mathcal{O},B,C]},$$
where $\mathcal{O} $ is an arbitrary point in $\mathbb{P}^2$ not on the line $L(A,B)$.
Here $[A,B,C]$ denotes the determinant of the $3 \times 3$ matrix, which consecutive columns are the homogeneous coordinates of points $A,B$ and $C$ respectively (see \cite{Gebert}, Lemma $4.6$). 

The cross ratio depends on the order of involved points. In fact there are at most six distinct ratios for any quadruple of collinear points. If 
$[A,B;C,D]=\lambda$, then these six values are: 
\begin{equation} \label{ratios}
\lambda, \hspace{0.5cm} \frac{1}{\lambda}, \hspace{0.5cm} 1- \lambda, \hspace{0.5cm}  \frac{1}{1-\lambda}, \hspace{0.5cm} \frac{\lambda}{\lambda -1}, \hspace{0.5cm}  \frac{\lambda-1}{\lambda} \end{equation}
(for details see \cite{Gebert}, Chapter $4$). 

If the value of the cross ratio of four collinear points belongs to the set 
$\left\{-1, \frac12, 2\right\}$ we say that these points form a \textit{harmonic quadruple}.

There is interesting relation between the value of cross ratio for quadruples of points $\{A_1, A_2, A_3, S \}$ and  $\{B_1, B_2, B_3, S \}$ and the condition that a related Pappus line passes through the point $S$.

\begin{theorem}
The Pappus line $L_{C,\sigma}$ passes through the point $S$ if and only if $$[A_1,A_2,A_3,S]=[B_{\sigma(1)},B_{\sigma(2)},B_{\sigma(3)},S].$$
\end{theorem}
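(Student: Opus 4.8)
The plan is to give a conceptual proof built on the \emph{axis of a projectivity}, and to note that the explicit data assembled in the proof of Theorem \ref{dualthm} makes a direct verification equally available. First I would introduce the unique projectivity $\pi\colon L_A\to L_B$ determined by $\pi(A_i)=B_{\sigma(i)}$ for $i=1,2,3$ (fundamental theorem of projective geometry). The crucial observation is that the three points cutting out $L_{C,\sigma}$ are exactly the cross-axis points of $\pi$: writing $X(P,Q)=L(P,\pi Q)\cap L(Q,\pi P)$, the definitions of the $C_{k,\sigma}$ give
$$C_{1,\sigma}=X(A_2,A_3),\qquad C_{2,\sigma}=X(A_1,A_3),\qquad C_{3,\sigma}=X(A_1,A_2).$$
By the cross-axis theorem for projectivities (the collinearity of all such $X(P,Q)$, the classical statement underlying Pappus, see \cite{Gebert}), the line $L_{C,\sigma}$ contains \emph{every} point of the form $X(P,Q)$.

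Next I would evaluate the construction at the pivot $S=L_A\cap L_B$. Taking $P=S$ and $Q=A_i$, the line $L(S,\pi A_i)$ joins two points of $L_B$ (namely $S$ and $\pi A_i=B_{\sigma(i)}$), hence equals $L_B$, so $X(S,A_i)=L_B\cap L(A_i,\pi S)=\pi(S)$, since $\pi(S)\in L_B$ lies on $L(A_i,\pi S)$. Thus $\pi(S)\in L_{C,\sigma}$ \emph{unconditionally}. Now suppose $S\in L_{C,\sigma}$: then $L_{C,\sigma}$ contains both $S$ and $\pi(S)$, two points of $L_B$, and because $L_{C,\sigma}\neq L_B$ for the configurations under consideration this forces $\pi(S)=S$. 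Conversely, if $\pi(S)=S$ then $S=\pi(S)\in L_{C,\sigma}$ by the previous step. Hence
$$S\in L_{C,\sigma}\iff \pi(S)=S.$$

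Finally I would translate $\pi(S)=S$ into the cross-ratio condition. Since $\pi$ preserves cross-ratio and sends the quadruple $(A_1,A_2,A_3,S)$ to $(B_{\sigma(1)},B_{\sigma(2)},B_{\sigma(3)},\pi(S))$, we have $[A_1,A_2,A_3,S]=[B_{\sigma(1)},B_{\sigma(2)},B_{\sigma(3)},\pi(S)]$; and because the cross-ratio of four collinear points with three of them fixed determines the fourth, $\pi(S)=S$ is equivalent to $[B_{\sigma(1)},B_{\sigma(2)},B_{\sigma(3)},S]=[A_1,A_2,A_3,S]$, which is precisely the asserted equivalence.

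The main obstacle in this route is the appeal to the cross-axis theorem to locate $\pi(S)$ on $L_{C,\sigma}$; if one prefers a self-contained argument, the explicit line equations and point coordinates already recorded in the proof of Theorem \ref{dualthm} reduce the statement to six direct checks. Concretely, substituting $S=(1:1:0)$ into each equation $L_{C,\sigma}$ yields the incidence condition as a polynomial in $a,b$ (for instance $ab=1$ for $\sigma=\mathrm{id}$), while the determinantal formula gives $[A_1,A_2,A_3,S]=1-a$ on $L_A$ and a M\"obius expression in $b$ on $L_B$; comparing the two then reproduces the same condition. The only mild difficulty in that computational approach is organizing the cases efficiently, using that $[A_1,A_2,A_3,S]$ is constant in $\sigma$ and that $\tau_2,\tau_3$ generate $S_3$.
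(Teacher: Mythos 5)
Your proposal is correct, but it follows a genuinely different route from the paper. The paper's proof is exactly the computational fallback you sketch at the end: working in the coordinates fixed in the proof of Theorem \ref{dualthm}, it evaluates $[A_1,A_2,A_3,S]=1-a$, tabulates the six cross-ratios $[B_{\sigma(1)},B_{\sigma(2)},B_{\sigma(3)},S]$ (Table \ref{concurrent}), and checks that equating each of them with $1-a$ yields precisely the same polynomial conditions in $a,b$ as substituting $S=(1:1:0)$ into the six equations of the lines $L_{C,\sigma}$, namely the list \eqref{glue}. Your main argument instead is synthetic: you identify $L_{C,\sigma}$ with the cross-axis of the projectivity $\pi\colon L_A\to L_B$ determined by $A_i\mapsto B_{\sigma(i)}$ (the identification $C_{1,\sigma}=X(A_2,A_3)$, $C_{2,\sigma}=X(A_1,A_3)$, $C_{3,\sigma}=X(A_1,A_2)$ does match the paper's definitions), observe that the axis always contains $\pi(S)$ because $X(S,A_i)=\pi(S)$, and conclude that $S\in L_{C,\sigma}$ iff $\pi(S)=S$, which by invariance of the cross-ratio and injectivity of $X\mapsto[B_{\sigma(1)},B_{\sigma(2)},B_{\sigma(3)},X]$ is the stated equality. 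This is sound, with two small points you should make explicit: first, the cross-axis theorem must be invoked in its full classical form, valid for every pair of distinct points of $L_A$ including the pair $(S,A_i)$ (equivalently, the classical remark that the axis passes through $\pi(S)$ and $\pi^{-1}(S)$); second, the claim $L_{C,\sigma}\neq L_B$ deserves a one-line proof, e.g.\ if $C_{1,\sigma}\in L_B$ then intersecting $L(A_2,B_{\sigma(3)})$ and $L(A_3,B_{\sigma(2)})$ with $L_B$ forces $B_{\sigma(3)}=C_{1,\sigma}=B_{\sigma(2)}$, contradicting distinctness. As for what each approach buys: yours is uniform in $\sigma$, coordinate-free, and explains the phenomenon conceptually ($S\in L_{C,\sigma}$ says exactly that $\pi$ fixes $S$, i.e.\ is a perspectivity), whereas the paper's case-by-case computation produces the explicit conditions \eqref{glue}, which are not a by-product but are reused later (in the lemma on $[C_{1,\sigma},C_{2,\sigma},C_{3,\sigma},S]$, in Corollary \ref{meet}, and in the classification of super Pappus arrangements).
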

\begin{proof}
One can easily check that for coordinates assumed in the proof of Theorem \ref{dualthm}
we have $[A_1,A_2,A_3,S]= 1-a$. The values of the corresponding cross-ratios $[B_{\sigma(1)},B_{\sigma(2)},B_{\sigma(3)},S]$ for $\sigma \in S_3$ are presented in Table \ref{concurrent}.

\begin{table}[ht] \caption{ } \label{concurrent}
\centering
\begin{tabular}{r|c|c|c|c|c|c} 
  $\sigma$ & $id$ & $\tau_2$  & $\tau_2 \tau_3$ & $\tau_3$ & $\tau_3^2$ & $\tau_2 \tau_3^2$ \\
  \hline
  $[B_{\sigma(1)},B_{\sigma(2)},B_{\sigma(3)},S]$ & $\frac{b-1}{b}$ & $\frac{b}{b-1}$ & $1-b$ & $b$ &  $\frac{1}{1-b}$ & $\frac{1}{b}$\\
\end{tabular} 
\end{table}

\vspace{1cm}
\noindent Evaluating the equality between each value of  $[B_{\sigma(1)},B_{\sigma(2)},B_{\sigma(3)},S]$ for $\sigma \in S_3$ with $1-a$ we obtain the same algebraic condition as 
checking if the  $S=(1:1:0)$ belongs to lines $L_{C, \sigma}$, namely 
\begin{equation} \label{glue}
\left\{ \begin{array}{lll}
\vspace{0.3cm}
ab-1=0, & \textrm{if $\sigma= id$}\\ 
\vspace{0.3cm}
ab-a+1, & \textrm{if $\sigma= \tau_2$}\\
\vspace{0.3cm}
a-b=0, & \textrm{if $\sigma= \tau_3^2$}\\
\vspace{0.3cm}
a+b-1=0, & \textrm{if $\sigma= \tau_3$}\\
\vspace{0.3cm}
ab-a-b=0, & \textrm{if $\sigma= \tau_2 \tau_3^2$}\\
\vspace{0.3cm}
ab-b+1=0, & \textrm{if $\sigma= \tau_2 \tau_3$} \\
\end{array} \right.,\end{equation}
which establishes the assertion.
\end{proof}

Assuming that the line $L_{C, \sigma}$ passes through the point $S$ we compute the cross ratio of points $C_{1,\sigma},C_{2, \sigma},C_{3, \sigma}$ and $S$.

\begin{lemma}
   If $S \in L_{C, \sigma}$, for a $\sigma \in S_3$.
   then $$[C_{1,\sigma},C_{2, \sigma},C_{3, \sigma},S]=1-a.$$
\end{lemma}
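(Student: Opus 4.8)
The plan is to avoid evaluating the cross-ratio on $L_{C,\sigma}$ directly and instead to transport the already-computed value $[A_1,A_2,A_3,S]=1-a$ from $L_A$ to $L_{C,\sigma}$ by a projection. Recall that a perspectivity (central projection from a point $O$) between two distinct lines preserves cross-ratios and fixes their point of intersection. Since $S\in L_A$ always, while $S\in L_{C,\sigma}$ by hypothesis, we have $L_A\cap L_{C,\sigma}=S$, so \emph{any} perspectivity $L_A\to L_{C,\sigma}$ automatically fixes $S$. It therefore suffices to produce a perspectivity carrying $A_i\mapsto C_{i,\sigma}$ for $i=1,2,3$; cross-ratio invariance then gives at once
$$[C_{1,\sigma},C_{2,\sigma},C_{3,\sigma},S]=[A_1,A_2,A_3,S]=1-a.$$

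Producing such a perspectivity amounts to exhibiting a single center $O_\sigma$, i.e. to showing that the three joining lines $L(A_1,C_{1,\sigma})$, $L(A_2,C_{2,\sigma})$, $L(A_3,C_{3,\sigma})$ are concurrent. This is the heart of the matter. I would first reduce to the two generators $\tau_2,\tau_3$ of $S_3$, and then, using the explicit coordinates of $A_1,A_2,A_3$ and of $C_{1,\sigma},C_{2,\sigma},C_{3,\sigma}$ from the proof of Theorem \ref{dualthm}, write the three joining lines as coefficient vectors and show that the $3\times 3$ determinant they form vanishes once the corresponding incidence condition from \eqref{glue} is imposed. For $\sigma=\mathrm{id}$, for instance, the three lines are $y-(1-a)z=0$, $abx-by+(b-a)z=0$ and $abx+(1-a)z=0$, and under the constraint $ab=1$ they meet at $O_{\mathrm{id}}=(a-1:1-a:1)$; the remaining permutations are analogous.

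The step I expect to be the genuine obstacle is precisely this concurrency, because it is not a formal consequence of the incidences alone. The perspectivities centered at the configuration vertices $B_{\sigma(j)}$ realize only the \emph{crossed} correspondences: projecting $L_A$ onto $L_{C,\sigma}$ from $B_{\sigma(3)}$ sends $A_1\mapsto C_{2,\sigma}$, $A_2\mapsto C_{1,\sigma}$ and $S\mapsto S$ (since $C_{2,\sigma}\in L(A_1,B_{\sigma(3)})$ and $C_{1,\sigma}\in L(A_2,B_{\sigma(3)})$), not the diagonal correspondence $A_i\mapsto C_{i,\sigma}$ that we need. The latter requires a genuinely new center $O_\sigma$, not among the configuration points, whose existence is forced only by the hypothesis $S\in L_{C,\sigma}$; thus the algebraic condition of \eqref{glue} must enter in an essential way, and it is exactly what makes the determinant above vanish.

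As a safeguard covering all six permutations uniformly, the statement can also be established by brute force: substitute the coordinates of $C_{1,\sigma},C_{2,\sigma},C_{3,\sigma}$ and $S=(1:1:0)$ into the determinantal cross-ratio formula $[A,B;C,D]=\frac{[\mathcal{O},A,C]\,[\mathcal{O},B,D]}{[\mathcal{O},A,D]\,[\mathcal{O},B,C]}$ with a convenient auxiliary point $\mathcal{O}$, simplify modulo the constraint of \eqref{glue} attached to the given $\sigma$, and verify that the result collapses to $1-a$. This direct route confirms the perspectivity computation and matches the algebraic spirit of the proof of Theorem \ref{dualthm}.
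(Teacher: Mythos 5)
Your proposal is correct, but it takes a genuinely different route from the paper. The paper in fact states the lemma without a written proof; in its computational setting the implicit argument is exactly your ``safeguard'': substitute the explicit coordinates of $C_{1,\sigma},C_{2,\sigma},C_{3,\sigma}$ and $S=(1:1:0)$ into the determinantal cross-ratio formula and simplify modulo the relevant condition in \eqref{glue} (for instance, for $\sigma=\mathrm{id}$ the constraint $ab=1$ gives $C_{2,\mathrm{id}}=(a:1:1)$ and $C_{3,\mathrm{id}}=(a-1:0:1)$, and the four determinants collapse to $1-a$). Your primary argument --- transporting $[A_1,A_2,A_3,S]=1-a$ from $L_A$ to $L_{C,\sigma}$ by a perspectivity fixing $S=L_A\cap L_{C,\sigma}$ --- is sound and more conceptual: for $\sigma=\mathrm{id}$ the concurrency determinant of the three joining lines equals $ab(ab-1)$, and for $\sigma=\tau_3$ it equals $ab(1-a-b)$, so in each case it vanishes precisely under the corresponding condition of \eqref{glue}; your center $O_{\mathrm{id}}=(a-1:1-a:1)$ is correct and lies on neither $L_A$ nor $L_{C,\mathrm{id}}$, so the perspectivity is well defined and carries the ordered quadruple $(A_1,A_2,A_3,S)$ to $(C_{1,\mathrm{id}},C_{2,\mathrm{id}},C_{3,\mathrm{id}},S)$. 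What your route buys is an explanation of the ``mysterious'' phenomenon the paper remarks on right after the lemma: the cross-ratio on $L_{C,\sigma}$ equals the one on $L_A$ because the two quadruples are perspective, which is exactly why $b$ disappears from the answer. What it costs is that the key concurrency is not a formal consequence of the incidences --- as you correctly note, the configuration points $B_{\sigma(j)}$ only furnish crossed perspectivities --- so it must still be verified permutation by permutation through a determinant computation of essentially the same weight as the paper's direct check; your argument is not shorter, only more illuminating.
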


  It might come as odd, that the cross-ratio of points on the line $L_{C,\sigma}$
  depends only on $a$, whereas their coordinates depend also on $b$, but in fact
  the dependence on $b$ is encoded in conditions \eqref{glue}.

Another natural question here is  whether it is possible that more than one of the lines  $L_{C, \sigma}$ passes through the point $S$. 
It turns out that it is indeed possible, only if points on the initial lines form a harmonic quadrangle.
The more precise coupling between specific pairs of Pappus lines is presented
in Corollary \ref{meet}.

\begin{corollary} \label{meet}
   The following table gives values of $a$ and $b$ for which a pair of Pappus lines passes through $S$ and indicates labels of the lines in the pair.
\begin{table}[ht] \caption{ } \label{double}
\centering
\renewcommand*{\arraystretch}{1.4}
\begin{tabular}{c|c|c|c} 
$b$ \slash $a$ & $-1$ & $\frac12$  &  $2$\\
\hline
$-1$ & $L_{C, id} \cap L_{C, \tau_2\tau_3}$ & $L_{C, \tau_3^2} \cap L_{C, \tau_2}$  & $L_{C, \tau_3} \cap L_{C, \tau_2 \tau_3}$ \\
   \hline
$\frac12$ & $L_{C, \tau_3^2} \cap L_{C, \tau_2 \tau_3^2}$ & $L_{C, \tau_3} \cap L_{C, \tau_2 \tau_3}$  & $L_{C, id} \cap L_{C, \tau_2}$ \\
   \hline
$2$ & $L_{C, \tau_3} \cap L_{C, \tau_2}$ & $L_{C, id} \cap L_{C, \tau_2 \tau_3^2}$ & $L_{C, \tau_3^2} \cap L_{C, \tau_2 \tau_3}$ \\
\end{tabular} 
\end{table}
\end{corollary}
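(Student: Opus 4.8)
The plan is to reduce Corollary~\ref{meet} to the cross-ratio criterion established in the preceding theorem and then to exploit the classical fact that the six anharmonic values \eqref{ratios} of a quadruple collapse precisely when the quadruple is harmonic. By that theorem, $L_{C,\sigma}$ passes through $S$ if and only if $1-a=[B_{\sigma(1)},B_{\sigma(2)},B_{\sigma(3)},S]$, and the six right-hand sides recorded in Table~\ref{concurrent} are exactly the anharmonic $S_3$-orbit of the single cross-ratio $\beta:=[B_1,B_2,B_3,S]=\tfrac{b-1}{b}$. Hence the whole question is governed by where the number $1-a=[A_1,A_2,A_3,S]$ falls among the members of this orbit.

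First I would observe that two Pappus lines $L_{C,\sigma}$ and $L_{C,\sigma'}$ pass simultaneously through $S$ exactly when two of the six orbit values coincide and their common value equals $1-a$. For $\lambda\in\mathbb{C}\setminus\{0,1\}$, two distinct members of $\{\lambda,\tfrac1\lambda,1-\lambda,\tfrac1{1-\lambda},\tfrac{\lambda}{\lambda-1},\tfrac{\lambda-1}{\lambda}\}$ agree if and only if $\lambda$ is harmonic, i.e. $\lambda\in\{-1,\tfrac12,2\}$; in that situation the orbit degenerates to those three harmonic values, each realized by exactly two of the permutations. Solving $\beta=\tfrac{b-1}{b}\in\{-1,\tfrac12,2\}$ returns $b\in\{-1,\tfrac12,2\}$, which is precisely the harmonicity of the quadruple on $L_B$ foreseen in the discussion preceding the corollary.

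Next, for each of the three harmonic values of $b$ I would substitute into Table~\ref{concurrent}, read off the three pairs of permutations producing equal entries together with the shared value, and then set $1-a$ equal to each of these three values in turn, which forces $a\in\{-1,\tfrac12,2\}$. This attaches to every $(a,b)\in\{-1,\tfrac12,2\}^2$ a single unordered pair $\{\sigma,\sigma'\}$, and assembling the nine cases reproduces the table entry by entry. Since for harmonic $\beta$ the orbit collapses to three distinct values each of multiplicity two -- and not to two values of multiplicity three, which arises only for the equianharmonic $\beta$ ruled out by the standing hypothesis that the six lines are distinct -- the number $1-a$ coincides with exactly two orbit members, so precisely two of the lines meet $S$; this simultaneously yields the ``at most two'' assertion made before the corollary.

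The main obstacle is purely organizational: tracing through Table~\ref{concurrent} which permutation realizes which orbit element and matching each coincident pair of values to the correct labels $\{\sigma,\sigma'\}$, while keeping $a,b\in\mathbb{C}\setminus\{0,1\}$ and verifying that the degeneration is genuinely threefold. A less conceptual but equally valid route is to solve the $\binom{6}{2}=15$ systems $f_\sigma(a,b)=f_{\sigma'}(a,b)=0$ built directly from the conditions \eqref{glue}, retaining only those whose solutions lie in $(\mathbb{C}\setminus\{0,1\})^2$ and do not force two of the $L_{C,\sigma}$ to coincide; this recovers the same nine pairs but obscures why the harmonic values $\{-1,\tfrac12,2\}$ are the ones that appear.
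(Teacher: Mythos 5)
Your strategy is sound and genuinely different from the paper's. The paper derives Table~\ref{double} exactly the way you sketch in your fallback paragraph: each candidate pair of permutations gives a system of two polynomial conditions in $(a,b)$ coming from the incidence criterion of the preceding theorem (the conditions \eqref{glue}), and the table records the solutions of these systems, organized by the values of $a$ and $b$. Your primary route --- recognizing the six entries of Table~\ref{concurrent} as the anharmonic orbit of $b$ and locating coincidences at harmonic values --- is more conceptual: it explains \emph{why} only the values $-1,\frac12,2$ occur, and it makes visible that every admissible pair couples an even permutation with an odd one, so the nine cells are exactly the nine even--odd pairs. That insight is absent from the paper's computation.

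There is, however, a genuine gap in how you dispose of the equianharmonic case. Coincidence of two \emph{cross-ratio values} is not coincidence of two \emph{lines}: if $b^2-b+1=0$ but $a\notin\{b,\,1-b\}$, then all six lines $L_{C,\sigma}$ are mutually distinct (and none passes through $S$), so equianharmonic $b$ as such does not violate the standing hypothesis, contrary to what your phrase ``ruled out by the standing hypothesis'' suggests. What actually closes this case, and what you must prove, is the following: for equianharmonic $b$ the three even incidence conditions all reduce to $a=1-b$ and the three odd ones to $a=b$, and in each of these two situations the three lines of the \emph{opposite} parity collapse to a single line. Hence an equianharmonic $b$ can produce incidences at $S$ only at the price of making the six lines non-distinct, and only after this computation does the hypothesis eliminate the case. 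Without it, your classification is not complete.

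Finally, carrying out your matching does \emph{not} ``reproduce the table entry by entry'': at $(a,b)=(2,-1)$ the coinciding entries of Table~\ref{concurrent} are those of $\tau_3$ and $\tau_2\tau_3^2$ (conditions $a+b-1=0$ and $ab-b+1=0$), so the pair is $L_{C,\tau_3}\cap L_{C,\tau_2\tau_3^2}$, whereas Table~\ref{double} prints $L_{C,\tau_3}\cap L_{C,\tau_2\tau_3}$. This is an error in the paper, not in your method: the printed table lists the pair $\{\tau_3,\tau_2\tau_3\}$ twice (also at $(a,b)=(\frac12,\frac12)$) and $\{\tau_3,\tau_2\tau_3^2\}$ never, which is impossible since each even--odd pair of conditions has a unique solution $(a,b)$. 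Be aware also that \eqref{glue} as printed attaches the polynomials $a-b$, $ab-a-b$ and $ab-b+1$ to the wrong permutations (compare with Table~\ref{concurrent} and with the explicit equations of the lines $L_{C,\sigma}$ in the proof of Theorem~\ref{dualthm}), so your fallback route, run verbatim from \eqref{glue}, would inherit that mislabelling; the cross-ratio route you chose as primary is the internally consistent one.
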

Note that there is certain regularity among the entries in Table \ref{double}. Namely passing from the entry with $$L_{C,\sigma_1}\cap L_{C,\sigma_2}$$ 
to the right requires to multiply both index permutations $\sigma_1, \sigma_2$ by $\tau_3^2$, so that the entry to the right is
$$L_{C,\sigma_1\tau_3^2}\cap L_{C,\sigma_2\tau_3^2},$$
whereas passing from the same entry downwards requires to multiply the first coordinate by $\tau_3^2$ and the second by $\tau_3$, so that the entry below is
$$L_{C,\sigma_1\tau_3^2}\cap L_{C,\sigma_2\tau_3}.$$

\begin{remark}
It is not possible that more than two of the lines $L_{C, \sigma}$ pass through the point $S$ (assuming that all lines $L_{C, \sigma}$ are mutually distinct). 
This follows immediately from Corollary \ref{meet}
but it can be seen also directly from conditions in \eqnref{glue}. Indeed, taking any three of equations there gives either a direct contradiction, or one gets the condition
\begin{equation} \label{overlap}
    b^2-b+1=0 
\end{equation}
for the parameter $b$. However, if \eqnref{overlap} is satisfied, then $3$ Pappus lines overlap and there are no $3$ distinct lines passing through $S$. The overlapping triples of lines are either indexed by odd permutations $L_{C, \tau_2}, L_{C, \tau_2 \tau_3}, L_{C, \tau_2 \tau_3^2}$ or by the even ones $L_{C, id}, L_{C, \tau_3}, L_{C,  \tau_3^2}$ depending on which root of equation \eqnref{overlap}
is taken for the value of $b$.
\end{remark}

\section{The super Pappus  arrangement} \label{super}

Theorem \ref{dualthm} says that the points $M^{*}_{C, \sigma}$ are contained in lines $L_A$ and $L_B$
but they differ typically from the initial points $A_i$ and $B_i$ for $i=1,2,3$. A natural question here is whether it may happen that these sets points are equal. It turns out that there do
exist values of $a$ and $b$ for which the returning points $M^{*}_{C, \sigma}$ are exactly the initial points. When this happens, we speak of a \textit{super Pappus arrangement}. Oddly, this is related to the problem of Pappus lines passing through the intersection point $S=L_A\cap L_B$ of the initial lines discussed in Section \ref{sec: additional incidences}. This is made precise in the following statement.

\begin{theorem}
The following conditions are equivalent
\begin{itemize}
    \item[$i)$] Two triples $A_i$ and $B_i$ for $i=1,2,3$ of mutually distinct collinear points form a super Pappus arrangement;
    \item[$ii)$] There is a pair of Pappus lines $L_{C, \sigma}$ passing through the point $S= L_A \cap L_B$;
    \item[$iii)$] Each quadruple of points $\{ A_1, A_2, A_3,S \}$, $\{ B_1, B_2, B_3,S \}$ and $\{ C_{1,\sigma}, C_{2,\sigma}, C_{3,\sigma},S \}$ is a harmonic quadruple.
\end{itemize}
\end{theorem}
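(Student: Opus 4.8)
The plan is to fix the normalized coordinates used in the proof of Theorem~\ref{dualthm} — so that $A_1=(1:0:0)$, $A_3=(0:1:0)$, $B_2=(0:0:1)$, $B_3=(1:1:1)$, $A_2=(1:a:0)$, $B_1=(1:1:b)$ and $S=(1:1:0)$ — and to show that each of the three conditions is equivalent to the single algebraic constraint
$$a\in\left\{-1,\tfrac12,2\right\}\quad\text{and}\quad b\in\left\{-1,\tfrac12,2\right\}.$$
Once all three are pinned to this common constraint, their mutual equivalence is immediate. I would establish the three reductions separately, drawing on the explicit data already assembled: the cross-ratio values, Corollary~\ref{meet}, and the coordinates of the returning points $M^{*}_{C,\sigma}$.

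For (iii) I would use the cross-ratios already computed. The quadruple $\{A_1,A_2,A_3,S\}$ has cross-ratio $1-a$, hence is harmonic iff $a\in\{-1,\tfrac12,2\}$; the quadruple $\{B_1,B_2,B_3,S\}$ has cross-ratio $\tfrac{b-1}{b}$ (the $\sigma=id$ entry of Table~\ref{concurrent}), which lies in $\{-1,\tfrac12,2\}$ iff $b\in\{-1,\tfrac12,2\}$. Because this harmonic set is a complete orbit under the six substitutions \eqref{ratios}, harmonicity does not depend on how the three collinear points are ordered, so no ambiguity arises from the labelling. Whenever $S\in L_{C,\sigma}$, the Lemma gives $[C_{1,\sigma},C_{2,\sigma},C_{3,\sigma},S]=1-a$, so the third quadruple is harmonic under exactly the same constraint on $a$; thus (iii) reduces to the displayed condition. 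Condition (ii) is even more direct: Corollary~\ref{meet} and Table~\ref{double} exhibit precisely the nine pairs $(a,b)\in\{-1,\tfrac12,2\}^2$ for which two lines $L_{C,\sigma}$ meet at $S$, and at each such pair the two lines are genuinely distinct (the coincidence $b^{2}-b+1=0$ of \eqref{overlap} has no root among these real values). Hence (ii) also holds iff $a,b\in\{-1,\tfrac12,2\}$.

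The substantive step is (i). Here I would use the coordinates of $M^{*}_{C,\sigma}$ from Theorem~\ref{dualthm} together with the structural fact (noted in the Remark) that the three returning points on $L_A$ depend only on $a$ and the three on $L_B$ only on $b$; this decouples the super-Pappus condition into an $L_A$-side condition in $a$ and an $L_B$-side condition in $b$. On $L_A$ the returning points are $(2a-1:a^2-2a:0)$, $(a+1:2a^2-a:0)$, $(a-2:-a^2-a:0)$, whose first coordinates vanish only at $a=\tfrac12,-1,2$ respectively. Since $A_3=(0:1:0)$ is the unique point of $L_A$ with vanishing first coordinate, the returning triple can contain $A_3$, and hence can equal $\{A_1,A_2,A_3\}$, only when $a\in\{-1,\tfrac12,2\}$; substituting each of these three values then shows the whole returning triple coincides with $\{A_1,A_2,A_3\}$, the other two points falling on $A_1$ and $A_2$ after a brief check of ratios. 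The identical argument on $L_B$, using the triple $M^{*}_{C,id},M^{*}_{C,\tau_3},M^{*}_{C,\tau_3^2}$ and the point $B_2=(0:0:1)$, yields $b\in\{-1,\tfrac12,2\}$. Thus (i) holds iff $a,b\in\{-1,\tfrac12,2\}$, closing the circle of equivalences.

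The obstacle I expect is bookkeeping rather than conceptual. In (i) one must confirm that at each admissible value \emph{all three} returning points land on the initial triple — not merely the one forced to be $A_3$ (resp. $B_2$) — which means identifying the correct permutation of $\{A_1,A_2,A_3\}$ and verifying that no two returning points collide. A second point needing care is the tacit hypothesis in (iii) that $S\in L_{C,\sigma}$, so that $\{C_{1,\sigma},C_{2,\sigma},C_{3,\sigma},S\}$ is genuinely a quadruple of collinear points: this is exactly what (ii) provides, and one must keep the dependence between the ``$\exists\,\sigma$'' of (ii) and the $\sigma$ occurring in (iii) logically aligned.
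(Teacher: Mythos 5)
Your proposal is correct and follows essentially the same route as the paper: both pin each of the three conditions, in the normalized coordinates of Theorem~\ref{dualthm}, to the single constraint $a,b\in\{-1,\tfrac12,2\}$, using the $M^{*}_{C,\sigma}$ coordinates for $i)$, Corollary~\ref{meet} for $ii)$, and the cross-ratio computations (Table~\ref{concurrent} and the Lemma) for $iii)$. Your organization of step $i)$ via the vanishing first coordinate forcing $A_3$ (resp.\ $B_2$) into the returning triple is a tidier way to arrive at the same case check the paper presents as tables, but it is the same argument in substance.
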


\begin{proof}
We will proof these equivalences by showing that each condition leads to the same values of the parameters $a$ and $b$. Firstly we take a closer look at the condition $i)$. By Theorem \ref{dualthm} it is enough to establish when the following sets coincide
$$\begin{array}{lcl}
\vspace{0,5cm}
\{ M_{C, id}^{*}, \hspace{0.2cm} M_{C, \tau_3}^{*}, \hspace{0.2cm} M_{C, \tau_3^2}^{*} \} & = & \{B_1, B_2, B_3 \},\\
\{ M_{C, \tau_2}^{*}, \hspace{0.2cm}  M_{C, \tau_2 \tau_3}^{*},  \hspace{0.2cm} M_{C, \tau_2 \tau_3^2}^{*} \}   & = & \{A_1, A_2, A_3 \}.\\
\end{array}$$
Direct computation shows that we have

\vspace{1cm}

\begin{table}[ht] 
\centering
\begin{tabular}{l|l|l} 
$a=2$ & $a=-1$  &  $a=\frac{1}{2}$\\
\hline
 $A_1 = M^{*}_{ \tau_2}$ &  $A_1 = M^{*}_{ \tau_2 \tau_3^2}$  & $A_1 = M^{*}_{ \tau_2 \tau_3}$\\

 $A_2 = M^{*}_{ \tau_2 \tau_3}$ &  $A_2 = M^{*}_{ \tau_2 }$  &$A_2 = M^{*}_{ \tau_2 \tau_3^2}$ \\

  $A_3 = M^{*}_{ \tau_2 \tau_3^2}$ &  $A_3 = M^{*}_{ \tau_2 \tau_3}$ & $A_3 = M^{*}_{ \tau_2}$
\end{tabular} 
\end{table}

\noindent and similarly

\vspace{0.6cm}
\begin{table}[ht] 
\centering
\begin{tabular}{l|l|l} 
$b=2$ & $b=-1$  &  $b=\frac{1}{2}$\\
\hline
 $B_1 = M^{*}_{ \tau_3^2}$ &  $B_1 = M^{*}_{ id}$  & $B_1 = M^{*}_{ \tau_3}$\\

 $B_2 = M^{*}_{id}$ &  $B_2 = M^{*}_{ \tau_3 }$  &$B_2 = M^{*}_{ \tau_3^2}$ \\

  $B_3 = M^{*}_{ \tau_3}$ &  $B_3 = M^{*}_{ \tau_3^2}$ & $B_3 = M^{*}_{id}$ 
\end{tabular} .
\end{table}

Thus the configuration is the super Pappus arrangement, if and only if, the pair $(a,b)$ is such that $a, b \in \{2, -1, \frac{1}{2} \}$. Then by Corollary \ref{meet} we conclude, that the conditions $i)$ and $ii)$ are equivalent. 

We complete the proof showing, that for any $a, b \in \{2, -1, \frac{1}{2} \}$ the sets $\{ A_1, A_2, A_3,S \}$, $\{ B_1, B_2, B_3,S \}$ and $\{ C_{1,\sigma}, C_{2,\sigma}, C_{3,\sigma},S \}$ are the harmonic quadruples. Indeed, let us notice that equalities $$[A_1,A_2,A_3,S]=[C_{1,\sigma},C_{2, \sigma},C_{3, \sigma},S]=-1$$
imply $a=2$. Taking under consideration possible orders of points $A_{i}$ and $C_{i, \sigma}$, by \eqnref{ratios} we obtain two additional values of $a$, namely $-1$ and $\frac{1}{2}$.

Analogously $[B_{\sigma(1)},B_{\sigma(2)},B_{\sigma(3)},S]=-1$, if and only if $b \in \{2, -1, \frac{1}{2} \}$ (compare with ratios in the Table \ref{concurrent}), what ends the proof.
\end{proof}
   Our research shows that, even though the Pappus arrangement is known for ages, 
   it is an inspiring source of ideas in the contemporary mathematics.

\paragraph*{\emph{Acknowledgement.}}
   We would like to thank Piotr Pokora and Tomasz Szemberg for helpful remarks.
   We are also obliged to Grzegorz Malara for assistance with programming some symbolic computations.


\bigskip \small

\bigskip
   Magdalena~Lampa-Baczy\'nska, Daniel W\'ojcik,
   Institute of Mathematics
   Pedagogical University of Cracow,
   Podchor\c a\.zych 2,
   PL-30-084 Krak\'ow, Poland

\nopagebreak

  \textit{E-mail address:} \texttt{lampa.baczynska@wp.pl}


   \textit{E-mail address:} \texttt{daniel.wojcik@up.krakow.pl}

\bigskip


\begin{thebibliography}{99}\footnotesize

\bibitem{EGH}
  Eisenbud, D. Green, M. and Harris, J.: Cayley–Bacharach theorems and conjectures. Bulletin of American Mathematical Society 33(3), 295–324, (1996).
  
\bibitem{kantor}
Kantor S.; Ueber die configurationen (3, 3) mit den Indices 8, 9 und ihren Zusammenhang
mit den Curven dritter Ordnung, Wien. Ber. (1881), 915–932.

\bibitem{Gebert}
Richter-Gebert J.: Perspectives on Projective Geometry, A Guided Tour Through Real and Complex Geometry, Springer, 2011.

\bibitem{Tao2014}
   Tao, T.: Algebraic combinatorial geometry: the polynomial method in arithmetic combinatorics,
   incidence combinatorics, and number theory, EMS Surv. Math. Sci. 1 (2014), 1--46.

\end{thebibliography}
\end{document}